\makeatletter\@ifpackageloaded{mathpazo}\@tempswatrue\@tempswafalse
  \DeclareFontFamily{OT1}{pzc}{}
  \DeclareFontShape{OT1}{pzc}{m}{it}{<-> s * [1.15] pzcmi7t}{}
  \DeclareMathAlphabet{\mathpzc}{OT1}{pzc}{m}{it}
\makeatletter\@ifpackageloaded{biblatex}{%
  \bibliography{references}
  \renewbibmacro{in:}{%
    \ifentrytype{incollection}{\printtext{\bibstring{in}\intitlepunct}}{}}
  \renewbibmacro{publisher+location+date}{%
    \iflistundef{publisher}
      {\setunit*{\addcomma\space}}
      {\setunit*{\addcomma\space}}%
    \printlist{publisher}%
    \setunit*{\addcomma\space}%
    \printlist{location}%
    \setunit*{\addcomma\space}%
    \usebibmacro{date}%
    \newunit}
  \DeclareFieldFormat[article]{pages}{#1\isdot}
  \DeclareFieldFormat[article,incollection,inproceedings,unpublished]{title}{#1\isdot}
  \DeclareFieldFormat[article]{journaltitle}{\mkbibemph{#1\isdot}}
  
}{}\makeatother
\makeatletter\@ifpackageloaded{hyperref}{%
  \usepackage{xcolor}
  \definecolor{dark-red}{rgb}{0.4,0.15,0.15}
  \definecolor{dark-blue}{rgb}{0.15,0.15,0.4}
  \definecolor{medium-blue}{rgb}{0,0,0.5}
  \hypersetup{
    colorlinks,
    linkcolor={dark-red},
    citecolor={dark-blue},
    urlcolor={medium-blue}%
  }

}{}\makeatother
\newcommand{\address}[1]{\vspace{-2em}\begin{center}{\footnotesize #1}\end{center}}
\newcommand{\mirror}[1]{\widetilde{#1}}
\newcommand{\A}{\mathpzc{A}}
\newcommand{\B}{\mathpzc{B}}
\newcommand{\D}{\mathpzc{D}}
\newcommand{\R}{\mathcal{R}}
\newcommand{\PP}{\mathpzc{P}}
\newcommand{\M}{\mathpzc{M}}
\newcommand{\Fac}{\mathpzc{F} \!}
\newcommand{\Pal}{\mathpzc{Pal} \!}
\newcommand{\Pri}{\mathpzc{Pri}}
\newcommand{\del}{\partial}
\declaretheorem[numberwithin=section,refname={theorem,theorems},Refname={Theorem,Theorems}]{theorem}
\declaretheorem[sibling=theorem,name=Lemma]{lemma}
\declaretheorem[sibling=theorem,name=Proposition]{proposition}
\declaretheorem[sibling=theorem,name=Corollary]{corollary}
\newcommand{\keywords}[1]{\par\noindent{\footnotesize{\em Keywords\/}: #1}}
\begin{document}
  \title{Privileged Factors in the Thue-Morse Word -- A Comparison of Privileged Words and Palindromes}
  \author{Jarkko Peltomäki\\
          \small \href{mailto:jspelt@utu.fi}{jspelt@utu.fi}}
  \date{}
  \maketitle
  \address{Turku Centre for Computer Science TUCS, 20520 Turku, Finland\\
  University of Turku, Department of Mathematics and Statistics, 20014 Turku, Finland}

  \noindent
  \hrulefill
  \begin{abstract}
    \vspace{-1em}
    \noindent
    In this paper we study the privileged complexity function of the Thue-Morse word. We prove a recursive formula
    describing this function, and using the formula we show that the function is unbounded and that the values of
    the function have arbitrarily large gaps of zeros. This demonstrates that the privileged complexity function of an
    infinite word can drastically differ from its palindromic complexity function, even though there
    are relations between these functions. Further we study the behavior of palindromes and privileged words in
    infinite words and the relation between rich words and privileged words.
    \vspace{1em}
    \keywords{thue-morse word, palindrome, privileged word, return word, rich word}
    \vspace{-1em}
  \end{abstract}
  \hrulefill

  \let\thefootnote\relax\footnotetext{{\scriptsize \textcopyright\, 2015, Elsevier. Licensed under the
  \href{http://creativecommons.org/licenses/by-nc-nd/4.0/}%
  {Creative Commons Attribution-NonCommercial-NoDerivatives 4.0 International}.}}

  \section{Introduction}
  In this paper we continue the study of so-called privileged words. The study of privileged words was initiated in the
  articles \cite{2013:a_characterization_of_subshifts_with_bounded_powers} and
  \cite{2013:introducing_privileged_words_privileged}. In
  \cite{2013:a_characterization_of_subshifts_with_bounded_powers} privileged words were used as a technical tool in
  order to characterize aperiodic and minimal subshifts with bounded powers. This new class of words has also interest
  in its own right. For example the paper \cite{2013:introducing_privileged_words_privileged} characterized Sturmian
  words using their privileged complexity function (the privileged complexity function of a word counts the number of
  factors of given length that are privileged words).

  Privileged words also appear in \cite{2013:on_palindromic_factorization_of_words}. The authors of this paper consider
  the palindromic length of a word, that is, the minimal number of palindromes needed to write a given word as a
  concatenation of palindromes. The authors observe that some of their arguments need only properties shared by both
  privileged words and palindromes, so results on the analogous privileged length of a word are obtained. Namely, it is
  proven that given an infinite $k$-power free word $w$, for every positive integer $P$ there exists a prefix of $w$
  which cannot be expressed as a concatenation of at most $P$ privileged words.

  The motivation for defining privileged words comes from the theory of rich words \cite{2009:palindromic_richness}.
  Rich words are words containing maximal number of palindromes as factors, and they are characterized by the fact
  that in a rich word every palindrome is a complete first return to a shorter palindrome. By altering this condition
  slightly, we get the definition of privileged words: a word is privileged if it is a complete first return to a
  shorter privileged word; the shortest privileged words are the letters of the alphabet and the empty word. At first
  glance, privileged words have nothing to do with palindromes. In general, this is indeed true, but there are some
  connections to rich words. Namely in
  \cite{2013:a_characterization_of_subshifts_with_bounded_powers} and
  \cite{2013:introducing_privileged_words_privileged} it was observed that a word is rich if and only if its set of
  palindromes coincides with its set of privileged words. Every position in a word introduces exactly one new
  privileged factor and at most one new palindromic factor. This means that in a sense privileged factors which are not
  palindromes measure the palindromic defect of a word. The palindromic defect of a word is the number of positions
  in the word that do not introduce a new palindrome; rich words are the words with zero defect.
  Other than this connection to richness, privileged words have other similarities with palindromes. For instance a
  privileged prefix of a privileged word occurs also as a suffix, just as a palindromic prefix of a palindrome occurs
  also as a suffix.

  In general privileged words behave differently from palindromes. In this paper we show that the privileged
  complexity function of an infinite word may behave in a much more complex way than its palindromic complexity
  function. We investigate the privileged complexity function $\A_t$ of the Thue-Morse word $t$ and prove that $\A_t$
  is unbounded and that its values have arbitrarily long gaps of zeros. In contrast it has been proven in
  \cite{2000:palindrome_complexity_bounds_for_primitive_substitution} and \cite{2003:palindrome_complexity} that the
  palindromic complexity function of fixed points of primitive morphisms is bounded. Also, as is easily seen, for any
  palindromic complexity function two consecutive zeros would imply that the function takes only finitely many nonzero
  values. To obtain the results we derive complete recursive formulas for computing $\A_t$. Various other kinds of
  complexity functions for the Thue-Morse word have been considered in the past. Recently in
  \cite{2013:on_the_number_of_unbordered_factors} Goč, Mousavi and Shallit proved that there exists a system of
  recurrences for the number of unbordered factors of given length in the Thue-Morse word.

  After defining the necessary notation and definitions, in \autoref{sec:thue-morse} we conclude the study of the
  function $\A_t$ initiated in \cite{2013:introducing_privileged_words_privileged}, and present the main results of
  this paper. We describe the relations between different types of privileged factors in the Thue-Morse word, and give
  a recursive formula for computing $\A_t$. We study the asymptotic behavior of this function, and prove that it is
  unbounded. We also verify a conjecture of \cite{2013:introducing_privileged_words_privileged} stating that the values
  of the function $\A_t$ contain arbitrarily long gaps of zeros.

  In \autoref{sec:privileged_palindrome} we briefly study the privileged palindrome complexity function of the
  Thue-Morse word, that is, the function counting the number of privileged factors which are also palindromes. As in
  \autoref{sec:thue-morse}, we give a recursive formula for computing the values of this function, and study its
  asymptotic behavior and gaps of zeros.

  In the last section we compare palindromes and privileged words. First we further sharpen the connection to
  palindromic richness mentioned above. Secondly we investigate when palindromic factors can form a proper subset of
  privileged factors in a word (remember: in rich words palindromes are exactly the privileged words).

  \section{Preliminaries}
  In this text, we let $A$ denote a finite \emph{alphabet}, which is a finite non-empty set of symbols. The elements
  of $A$ are called \emph{letters}. A (finite) \emph{word} over $A$ is a sequence of letters. To the empty sequence
  corresponds the \emph{empty word}, denoted by $\varepsilon$. The set of all finite words over $A$ is denoted by $A^*$.
  The set of non-empty words over $A$ is the set $A^+ := A^* \setminus \{\varepsilon\}$. A natural operation on words
  is concatenation. Under this operation $A^*$ is a free monoid over $A$. The concatenation of two words $u$ and $v$ is
  denoted by $uv$. The letters occurring in the word $w$ form the \emph{alphabet of $w$}. From now on we assume that
  binary words are over the alphabet $\{0,1\}$. Given a finite word $w = a_1 a_2 \cdots a_n$ of $n$ letters, we say
  that the \emph{length} of $w$, denoted by $|w|$, is equal to $n$. By convention the length of the empty word is $0$.
  The set of all words of length $n$ over the alphabet $A$ is denoted by $A^n$. A word $w$ is \emph{primitive} if $w$
  is of the form $z^n$ if and only if $n = 1$. A word is \emph{non-primitive} if it is not primitive.

  An \emph{infinite word} $w$ over $A$ is a function from the natural numbers to $A$. We consider such a function as a
  sequence indexed by the natural numbers with values in $A$. We write concisely $w = a_1 a_2 a_3 \cdots$ with $a_i \in A$.
  The set of infinite words is denoted by $A^\omega$. For an infinite word $w$ we write $|w| = \infty$. An infinite
  word $w$ is said to be \emph{ultimately periodic} if we can write it in the form $w = uv^\omega = uvvv \cdots$ for
  some words $u,v \in A^*$. If $u = \varepsilon$, then $w$ is said to be \emph{periodic}. An infinite word which is not
  ultimately periodic is \emph{aperiodic}.

  A finite word $u$ is a \emph{factor} of the finite or infinite word $w$ if we can write that $w = zuv$ for some
  $z \in A^*$ and $v \in A^* \cup A^\omega$. If $z = \varepsilon$, then the factor $u$ is called a \emph{prefix} of
  $w$. If $v = \varepsilon$, then we say that $u$ is a \emph{suffix} of $w$. If a word $u$ is both a prefix and a suffix
  of $w$, then $u$ is a \emph{border} of $w$. The set of factors of $w$ is denoted by $\Fac(w)$. The set $\Fac_w(n)$
  is defined to contain all factors of $w$ of length $n$. We call $u$ \emph{a central factor} of $w$ if there exists a
  factorization $w = xuy$ with $|x| = |y|$. If $w = a_1 a_2 \cdots a_n$, then we let $w[i,j] = a_i \cdots a_j$
  whenever the choices of positions $i$ and $j$ make sense. This notion is extended to infinite words in a natural
  way. An \emph{occurrence} of $u$ in $w$ is a a position $i$ such that $w[i,i+|u|-1] = u$. If such a position exists,
  then we say that $u$ \emph{occurs} in $w$. By $|w|_u$ we denote the number of occurrences of the factor $u$ in the
  word $w$. If $|w|_u = 1$, then we say that $u$ is \emph{unioccurrent} in $w$. We say that a position $i$
  \emph{introduces} a factor $u$ if $w[i - |u| + 1, i] = u$ and $u$ is unioccurrent in $w[1,i]$. A
  \emph{complete first return} to the word $u$ is a word starting and ending with $u$ and containing exactly two
  occurrences of $u$ (occurrences of $u$ may overlap). A word which is a complete first return to some word is called
  a \emph{complete return word}. A \emph{complete return factor} is a factor of some word which is a complete return
  word.

  An infinite word $w$ is \emph{recurrent} if each of its factors occurs in it infinitely often. The word $w$ is
  called \emph{uniformly recurrent} if each factor $u$ occurs in it infinitely often, and the gap between two
  consecutive occurrences of $u$ in $w$ is bounded by a constant depending only on $u$. Equivalently $w$ is uniformly
  recurrent if for each factor $u$ there exists an integer $R$ such that every factor of $w$ of length $R$ contains an
  occurrence of $u$.

  Let $A$ and $B$ be two alphabets. A \emph{morphism} from $A^*$ to $B^+$ is a mapping $\varphi: A^* \to B^+$ such that
  $\varphi(uv) = \varphi(u)\varphi(v)$ for all words $u, v \in A^*$. A morphism $\varphi: A^* \to A^+$ is said to be
  prolongable on the letter $a$ if $\varphi(a)$ begins with letter $a$ and $|\varphi(a)| > 1$. Then clearly
  $\varphi^n(a)$ is a prefix of $\varphi^{n+1}(a)$, so we obtain a fixed point
  $\varphi^\omega(a) := \lim_{n \to \infty} \varphi^n(a)$. A morphism $\varphi: A^* \to A^+$ is \emph{primitive} if
  there exists $n \geq 1$ such that for all $a \in A$ the image $\varphi^n(a)$ contains every letter of $A$ at least
  once.

  The word $\del_{i,j}(u)$, where $i + j \leq |u|$, is obtained from the word $u$ by deleting $i$ letters from the
  beginning, and $j$ letters from the end. Let $\varphi$ be a morphism with fixed point $w = \varphi^\omega(a)$. We
  say that a factor $u$ of $w$ admits an \emph{interpretation} $s = (x_0 x_1 \cdots x_{n+1}, i, j)$ by $\varphi$ if
  $u = \del_{i,j}(\varphi(x_0 x_1 \cdots x_{n+1}))$ where $x_i$ are letters, $0 \leq i < |\varphi(x_0)|$ and
  $0 \leq j < |\varphi(x_{n+1})|$. The word $x_0 x_1 \cdots x_{n+1}$ is called the ancestor of the interpretation $s$.
  In this paper we are focused on the Thue-Morse morphism, and in this particular case all sufficiently long factors of
  the fixed point have a unique interpretation (this is demonstrated in \autoref{sec:thue-morse}). Thus it is
  convenient to just talk about the interpretation and the ancestor of $u$ by $\varphi$. In a factor $u$ of $w$ we
  often separate images of letters by bars. For example if $\varphi: 0 \mapsto 01, 1 \mapsto 10$ is the Thue-Morse
  morphism, then the word $01100$ has ancestor $010$, and we place bars as follows: $01|10|0$. If a factor has a unique
  interpretation, then there is only one way to place the bars in that factor, and conversely.

  The \emph{reversal} $\mirror{w}$ of $w = a_1 a_2 \cdots a_n$ is the word $\mirror{w} = a_n \cdots a_2 a_1$; notation
  $w^\sim$ is also used for $\mirror{w}$. If $\mirror{w} = w$, then we say that $w$ is a \emph{palindrome}. By
  convention the empty word is a palindrome. The set of palindromes of $w$ is denoted by $\Pal(w)$. Moreover we define
  $\Pal_w(n) = \Pal(w) \cap \Fac_w(n)$. We say that a word $w$ is \emph{closed under reversal} if for each
  $u \in \Fac(w)$ it holds that $\mirror{u} \in \Fac(w)$. It is well-known that a finite word $w$ contains at most
  $|w| + 1$ distinct palindromic factors (see Proposition 2 of
  \cite{2001:episturmian_words_and_some_constructions_of_de,}). Equality is attained if each position of $w$ introduces
  exactly one new palindrome. \emph{The (palindromic) defect} $\D(w)$ of $w$ is the number $|w| + 1 - |\Pal(w)|$. The
  defect measures the abundance of palindromes in $w$. If $\D(w) = 0$ (i.e., it contains the maximum possible number of
  distinct palindromes), then $w$ is called \emph{rich}. The defect for infinite word $w$ is defined as
  $\D(w) = \sup\{\D(u) : u \text{ is a prefix of } w\}$. The defect of a finite word $w$ is equal to the number of
  prefixes $u$ of $w$ such that $u$ does not have a unioccurrent longest palindromic suffix. For a good reference on
  defect and rich words see \cite{2009:palindromic_richness}.
 
  \emph{Privileged words} were first defined in \cite{2013:a_characterization_of_subshifts_with_bounded_powers} and
  further developed in \cite{2013:introducing_privileged_words_privileged}. The set of privileged words over the
  alphabet $A$, denoted by $\Pri(A)$, is defined inductively as follows:
  \renewcommand{\labelitemi}{$\hyphen$}
  \begin{itemize}
    \item $u \in \Pri(A)$ if $u$ is a complete first return to a shorter privileged word $v \in \Pri(A)$,
    \item the shortest privileged words are the letters of $A$ and the empty word $\varepsilon$.
  \end{itemize}
  The set of privileged factors of a word $w$ is denoted $\Pri(w)$. We define $\Pri_w(n) = \Pri(w) \cap \Fac_w(n)$.
  We set $\A_w(n) = |\Pri_w(n)|$. The function $\A_w$ is called the \emph{privileged complexity function} of $w$.
  Next we list elementary properties of privileged words proved in \cite{2013:introducing_privileged_words_privileged}.
  For completeness we provide full proofs.

  \begin{lemma}\label{lem:privileged_prefix_suffix}
    Let $w$ be a privileged word and $u$ an arbitrary privileged prefix (respectively suffix) of $w$. Then $u$ is a
    suffix (respectively prefix) of $w$.
  \end{lemma}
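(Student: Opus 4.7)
The plan is to prove both halves by strong induction on $|w|$; the prefix statement goes through by an explicit argument, and the suffix statement follows by passing to the reversal (or by an identical dual argument, noting that a word $w$ is privileged iff $\mirror{w}$ is). Let $u$ be a privileged prefix of $w$. If $u \in \{\varepsilon, w\}$ the claim is trivial, so assume $0 < |u| < |w|$. By the definition of privileged, $w$ is a complete first return to some shorter privileged word $v$, so $v$ is both a prefix and a suffix of $w$ and occurs in $w$ exactly twice, namely at positions $1$ and $|w|-|v|+1$. The proof then splits into two cases on $|u|$ versus $|v|$.

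In the first case, $|u| \leq |v|$. Then $u$ and $v$ are both prefixes of $w$ with $|u| \leq |v|$, so $u$ is a prefix of $v$. Since $v$ is privileged and $|v| < |w|$, the induction hypothesis applied to $v$ gives that $u$ is a suffix of $v$, and combining with the fact that $v$ is a suffix of $w$ yields that $u$ is a suffix of $w$.

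In the second case, $|v| < |u| < |w|$. Now $v$ is a prefix of $u$, and $v$ is privileged and shorter than $u$, so it is a privileged prefix of the privileged word $u$. Since $|u| < |w|$, the induction hypothesis applies to $u$ itself, forcing $v$ to also be a suffix of $u$. But then $v$ occurs in $w$ ending at position $|u|$, and by the strict inequalities $|v| < |u| < |w|$ this occurrence is strictly between the two known occurrences at positions $1$ and $|w|-|v|+1$, contradicting the fact that $w$ is a complete first return to $v$. Hence this case is vacuous, and the induction closes.

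The main obstacle, and the slightly non-obvious move, is realizing that the induction hypothesis must be applied to $u$ (not just to the witness $v$) in order to handle the case $|u| > |v|$; once the double role of the IH is spotted, the contradiction with the complete-first-return property of $w$ eliminates that case cleanly. The suffix version requires no new ideas: apply the prefix version to $\mirror{w}$, whose privileged suffixes correspond to the privileged prefixes of $w$.
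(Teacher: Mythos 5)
Your proof is correct, and since the paper only quotes this lemma from the earlier work without reproducing its proof, there is nothing to diverge from: your strong induction on $|w|$, with the key step of applying the induction hypothesis to $u$ itself to produce a forbidden third occurrence of $v$ when $|v|<|u|<|w|$, is essentially the standard argument. The reduction of the suffix half to the prefix half via reversal is also fine, granted the (easily verified, and established in the cited paper) fact that $w$ is privileged if and only if $\mirror{w}$ is.
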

  \begin{proof}
    If $|w| \leq 1$ or $u = w$, then the claim is clear. Suppose that $|w| \geq 2$ and $|u| < |w|$. By definition $w$ is a
    complete first return to a shorter privileged word $v$. If $|v| < |u|$, then by induction $v$ is a suffix of $u$, and
    thus $v$ would have at least three occurrences in $w$ which is impossible. If $u = v$, then the claim is clear. Finally
    assume that $|v| > |u|$. Then by induction $u$ is a suffix of $v$, and thus a suffix of $w$. The proof in the case that
    the roles of prefix and suffix are reversed is symmetric.
  \end{proof}

  \begin{lemma}
    Let $w$ be a privileged word, and $u$ its longest proper privileged prefix (or suffix). Then $w$ is a complete first
    return to $u$.
  \end{lemma}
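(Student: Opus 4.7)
The plan is to use the definition of privileged words together with the preceding lemma to force the longest proper privileged prefix to coincide with the ``witness'' privileged word certifying that $w$ is a complete return, or at least to inherit its uniqueness of occurrences.

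First I would unwind the definition: since $w$ is privileged and not a letter, there is some privileged word $v$, shorter than $w$, such that $w$ is a complete first return to $v$. In particular $v$ is both a prefix and a suffix of $w$ and appears in $w$ exactly twice. Because $u$ is the \emph{longest} proper privileged prefix of $w$, we have $|v|\le|u|$. Both $u$ and $v$ are prefixes of $w$, so $v$ is a prefix of $u$; and since $u$ is privileged and $v$ is a privileged prefix of $u$, the previous lemma tells us that $v$ is also a suffix of $u$. Likewise, since $u$ is a privileged prefix of the privileged word $w$, the previous lemma gives that $u$ is a suffix of $w$. So $w$ already begins and ends with $u$; what remains is to prove that these are the only two occurrences of $u$ in $w$.

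This is the heart of the argument, and I would handle it by transporting occurrences of $u$ back to occurrences of $v$. Suppose $u$ occurs in $w$ starting at position $i$. Because $v$ is both a prefix and a suffix of $u$, this occurrence of $u$ forces occurrences of $v$ in $w$ at positions $i$ and $i+|u|-|v|$. But $w$ is a complete first return to $v$, so $v$ only occurs at positions $1$ and $|w|-|v|+1$. The condition $|v|\le|u|$ combined with the requirement that $u$ fit inside $w$ (i.e.\ $i\le|w|-|u|+1<|w|-|v|+1$ when $|v|<|u|$) rules out $i=|w|-|v|+1$, leaving only $i=1$ or $i+|u|-|v|=|w|-|v|+1$, i.e.\ $i=|w|-|u|+1$. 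In the degenerate case $|v|=|u|$ we have $u=v$ directly and the conclusion is immediate. Thus $u$ has exactly two occurrences in $w$, at the start and at the end, which is precisely the statement that $w$ is a complete first return to $u$.

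The suffix version follows by the obviously symmetric argument (using reversals, or rereading the above with ``prefix'' and ``suffix'' interchanged). The main obstacle I anticipate is bookkeeping the position-counting argument cleanly, in particular making sure one does not accidentally conflate the two forced $v$-occurrences when $u$ and $v$ have lengths close together, and handling the trivial cases where $w$ is a single letter (so $u=\varepsilon$) under whatever convention the paper uses for complete first returns to the empty word.
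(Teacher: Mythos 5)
The paper states this lemma without proof (it is quoted from the earlier paper on privileged words), so there is no in-text argument to compare against; your proposal must stand on its own. Its overall strategy --- take a witness $v$ from the definition, use the preceding lemma to make $u$ a border of $w$ and $v$ a border of $u$, and then transport occurrences of $u$ to occurrences of $v$ --- is the right one, but the position count in the case $|v|<|u|$ contains a genuine logical slip. An occurrence of $u$ at position $i$ forces occurrences of $v$ at \emph{both} positions $i$ and $i+|u|-|v|$, so the two conditions $i\in\{1,\,|w|-|v|+1\}$ and $i+|u|-|v|\in\{1,\,|w|-|v|+1\}$ must hold simultaneously; you instead conclude with the disjunction ``$i=1$ or $i=|w|-|u|+1$''. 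Reading the constraints correctly, your own inequality $i\le|w|-|u|+1<|w|-|v|+1$ forces $i=1$ for \emph{every} occurrence of $u$, so $u$ would be unioccurrent in $w$ --- contradicting the fact, which you already established, that $u$ is also a suffix of $w$ at position $|w|-|u|+1\neq 1$.

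So the case $|v|<|u|$ is not a case to be argued through but a case to be excluded: it is impossible, hence $|u|=|v|$, i.e.\ the longest proper privileged prefix \emph{is} the witness $v$, and $w$ is a complete first return to $u=v$ directly by the definition. This is exactly your ``degenerate case,'' which turns out to be the only case. Concretely: $u$ is a suffix of $w$, hence its prefix $v$ occurs at position $|w|-|u|+1$; since $v$ occurs in $w$ only at positions $1$ and $|w|-|v|+1$ and $|u|<|w|$, necessarily $|u|=|v|$. Your proof is therefore reparable within its own framework --- and the repair makes it shorter --- but as written the key counting step does not go through.
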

  \begin{proof}
    If $|w| \leq 1$, then there is nothing to prove. Suppose that $|w| \geq 2$ and that $w$ is a complete first return to
    privileged word $v$. Now if $|u| > |v|$, then $v$ is a prefix of $u$, and thus by \autoref{lem:privileged_prefix_suffix}
    also a suffix of $u$. Hence $w$ has at least three occurrences of $v$, a contradiction. Therefore $|u| \leq |v|$, and
    by the maximality of $u$, $u = v$, which proves the claim. The proof in the case that the roles of prefix and suffix
    are reversed is symmetric.
  \end{proof}

  \begin{lemma}
    Every position in a word introduces exactly one new privileged factor.
  \end{lemma}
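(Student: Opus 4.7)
The plan is to identify a canonical candidate for the new privileged factor at position $i$, namely the longest privileged suffix of $w[1,i]$, and then to argue that every shorter privileged suffix of $w[1,i]$ has already occurred earlier.

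First I would let $p$ denote the longest privileged suffix of $w[1,i]$; such a $p$ exists because each letter is privileged, so at least $w[i]$ qualifies. The key step is to prove that $p$ is unioccurrent in $w[1,i]$, hence is genuinely introduced at position $i$. Suppose for contradiction that $p$ has an earlier occurrence, and let $j < i$ be the largest index with $w[j-|p|+1, j] = p$. Then the factor $w[j-|p|+1, i]$ starts with $p$ and ends with $p$, and by the maximality of $j$ no occurrence of $p$ sits strictly between these two; so it is a complete first return to $p$. Since $p$ is privileged and strictly shorter than this factor, the definition of privileged words makes this longer factor privileged as well. But it is then a privileged suffix of $w[1,i]$ strictly longer than $p$, contradicting the choice of $p$.

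Next I would show that every privileged suffix $q$ of $w[1,i]$ with $|q| < |p|$ has already occurred before position $i$. Since $p$ is privileged and $q$ is a privileged suffix of $p$, the first lemma above gives that $q$ is also a prefix of $p$. Hence $q$ occurs in $w$ ending at position $i - |p| + |q| < i$, so $q$ is not unioccurrent in $w[1,i]$ and therefore not introduced at $i$.

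Combining the two steps, $p$ is the unique privileged factor introduced at position $i$. I expect the main obstacle to be the first step: one has to guess that the longest privileged suffix is the right candidate, and then execute the closest-earlier-occurrence trick to extract a longer complete first return. Once one thinks of taking the largest $j < i$ with $p$ ending at $j$, the contradiction is immediate; the second step is then essentially bookkeeping via the first lemma of the section.
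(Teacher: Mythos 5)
Your proof is correct and follows essentially the standard argument from the cited reference (the paper itself states this lemma without proof): the longest privileged suffix of $w[1,i]$ is unioccurrent via the closest-earlier-occurrence argument, and every shorter privileged suffix is a border of it and hence has an earlier occurrence. Both steps are executed soundly, including the use of the maximality of $j$ to guarantee exactly two occurrences of $p$ in the extracted complete first return.
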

  \begin{proof}
    It is sufficient to show that appending a new letter $a$ to a word $w$ introduces exactly one new privileged
    factor. Since letters are privileged, the word $wa$ has as a suffix a privileged word $u$ of maximal length. Assume
    on the contrary that $u$ occurs in $w$. Then $wa$ has as a suffix a complete first return to $u$, denote it by $v$.
    By definition $v$ is privileged. This contradicts the maximality of $u$, so indeed $u$ does not occur in $w$.
    Finally if appending $a$ introduced another privileged factor, say $z$, then by maximality of $u$, we would have
    that $|z| < |u|$. Thus $z$ would be a suffix of $u$, and by \autoref{lem:privileged_prefix_suffix} $z$ would be a
    prefix of $u$. Consequently $z$ would occur already in $w$ contradicting the assumption that appending $a$
    introduced $z$.
  \end{proof}

  \section{The Privileged Complexity of the Thue-Morse Word}\label{sec:thue-morse}
  In this section we prove a recursive formula for the privileged complexity function of the Thue-Morse word.
  Moreover we study the asymptotic behavior of the function and the occurrences of zeros in its values.

  Let $t = 0110100110010110 \cdots$ be the infinite Thue-Morse word (see Chapter 2 of
  \cite{1983:combinatorics_on_words}). The word $t$ is a fixed point of the morphism $\varphi$ and its square
  $\theta = \varphi^2$.
  \begin{align*}
    \varphi:
    \begin{array}{l}
      0 \mapsto 01 \\
      1 \mapsto 10
    \end{array}
    \qquad
    \theta:
    \begin{array}{l}
      0 \mapsto 0110 \\
      1 \mapsto 1001
    \end{array}
  \end{align*}
  The word $t$ has the following well-known property (\emph{an overlap} is a factor of the form $auaua$ where $a$ is a letter):

  \begin{theorem}
    The Thue-Morse word $t$ does not contain overlaps, that is, it is overlap-free.
  \end{theorem}
  \begin{proof}
    For a proof see e.g. Theorem 2.2.3 of \cite{1983:combinatorics_on_words}.
  \end{proof}

  By this theorem the longest privileged proper border $u$ of a privileged factor $w$ of $t$ cannot overlap with
  itself in $w$, that is, $|u| \leq |w|/2$. In what follows, we implicitly assume this fact. Using overlap-freeness, by
  mere inspection we obtain the following:

  \begin{lemma}\label{lem:unique_interpretation}
    Every factor of $t$ of length at least $4$ admits a unique interpretation by $\varphi$. Every factor of $t$ of length at
    least $7$ admits a unique interpretation by $\theta$. \qed
  \end{lemma}

  Using this lemma we are able to prove the following important proposition:

  \begin{proposition}\label{prp:theta_number_preserving}
    Let $w, u \in \Fac(t)$ be such that $|w| \geq |u| \geq 2$. Then $|\theta(w)|_{\theta(u)} = |w|_u$.
  \end{proposition}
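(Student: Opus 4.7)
My plan is to prove the two inequalities $|\theta(w)|_{\theta(u)} \geq |w|_u$ and $|\theta(w)|_{\theta(u)} \leq |w|_u$ separately. The first direction is immediate by applying the morphism: if $u$ occurs in $w$ at positions $p_1 < p_2 < \cdots < p_k$, then $\theta(u)$ occurs in $\theta(w)$ at the $k$ distinct positions $4(p_j - 1) + 1$. The whole content of the proposition lies in showing the reverse: that every occurrence of $\theta(u)$ inside $\theta(w)$ must in fact be block-aligned, starting at some position of the form $4(p - 1) + 1$, and that the length-$|u|$ factor of $w$ at position $p$ must be $u$ itself.

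To establish this, I would combine \autoref{lem:unique_interpretation} with the definition of an interpretation by $\theta$. Since $|u| \geq 2$, the factor $\theta(u)$ has length at least $8$, so by the lemma it has a unique $\theta$-interpretation. There is the ``trivial'' interpretation $(u, 0, 0)$ coming from $\theta(u) = \theta(u)$. On the other hand, any occurrence of $\theta(u)$ at position $q$ in $\theta(w)$ inherits an interpretation from the block decomposition of $\theta(w)$: writing $q - 1 = 4k + i$ with $0 \leq i < 4$, one reads off an interpretation $(y, i, j)$ whose ancestor $y$ is the factor of $w$ consisting of those letters whose $\theta$-images meet the occurrence. Uniqueness forces the two interpretations to agree, so $i = 0$, $j = 0$, and $y = u$. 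This exhibits an occurrence of $u$ in $w$ at position $k + 1$, completing the reverse inequality.

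The main subtlety is to check that the inherited interpretation from $\theta(w)$ is indeed a valid interpretation in the formal sense of the definition, in particular that its ancestor $y$ has at least two letters so that it fits the format $x_0 x_1 \ldots x_{n+1}$. This is precisely where the hypothesis $|u| \geq 2$ does its work: it guarantees $|\theta(u)| \geq 8$, which both invokes \autoref{lem:unique_interpretation} and forces any occurrence inside $\theta(w)$ to meet at least two adjacent blocks (two when $i = 0$, three when $i \in \{1,2,3\}$), so $|y| \geq 2$. Beyond this bookkeeping, the argument is a direct application of uniqueness of the $\theta$-interpretation and no further case analysis seems necessary.
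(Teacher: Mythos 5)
Your proposal is correct and follows essentially the same route as the paper: both reduce the nontrivial inequality to the observation that $|\theta(u)|\geq 8$, so by \autoref{lem:unique_interpretation} the occurrence of $\theta(u)$ in $\theta(w)$ must be block-aligned with ancestor $u$, yielding an occurrence of $u$ in $w$. Your version merely spells out the bookkeeping about the inherited interpretation in more detail than the paper does.
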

  \begin{proof}
    Clearly always $|\theta(w)|_{\theta(u)} \geq |w|_u$. Say $\theta(u)$ occurs in $\theta(w)$, so
    $\theta(w) = \alpha \theta(u) \beta$ for some words $\alpha$ and $\beta$. There must exist words $\lambda$ and
    $\mu$ such that $\theta(\lambda) = \alpha$ and $\theta(\mu) = \beta$, since otherwise $\theta(u)$ would admit two
    interpretations by $\theta$, which is impossible by \autoref{lem:unique_interpretation} as $|\theta(u)| \geq 8$.
    Hence $w = \lambda u \mu$. This proves that $|\theta(w)|_{\theta(u)} \leq |w|_u$.
  \end{proof}

  The following interesting result can be inferred from Theorem 4.3. in
  \cite{2007:return_words_in_fixed_points_of_substitutions}.
  
  \begin{proposition}\label{prp:tm_number_of_returns}\emph{\cite{2007:return_words_in_fixed_points_of_substitutions}}.
    Every factor except $0$ and $1$ in the Thue-Morse word has exactly $4$ complete returns.
  \end{proposition}

  Let $E$ be the morphism defined by $E(0) = 1$ and $E(1) = 0$.
  As for every $w \in \Fac(t)$ also $E(w) \in \Fac(t)$, we can
  focus primarily on factors beginning with $0$. We let $\Pri_u(n)$ denote the set $\Pri_t(n) \cap u\cdot\{0,1\}^*$,
  and we set $\A_u(n) = |\Pri_u(n)|$. As $111$ is not a factor of $t$, the complete first returns to $0$ are $00, 010$
  and $0110$. Clearly privileged factors beginning with letter $0$ with length greater than one can be divided into
  three groups depending on the first four letters of the word. We have that
  \begin{align*}
    \Pri_0(n) = \Pri_{00}(n) \cup \Pri_{010}(n) \cup \Pri_{0110}(n)
  \end{align*}
  for $n > 1$. Thus for the privileged complexity function $\A_t$ of the Thue-Morse word we have that
  \begin{align*}
    \frac{1}{2} \A_t(n) = \A_{00}(n) + \A_{010}(n) + \A_{0110}(n)
  \end{align*}
  for $n > 1$. Using overlap-freeness, we can easily see that $\Pri_0(1) = \{0\}, \Pri_0(2) = \{00\}, 
  \Pri_0(3) = \{010\}$ and $\Pri_0(4) = \{0110\}$. Hence $\A_t(1) = \A_t(2) = \A_t(3) = \A_t(4) = 2$. Next we state the
  main results of this paper.

  \begin{theorem}\label{thm:tm_privileged_complexity}
    The privileged complexity function $\A_t$ of the Thue-Morse word satisfies
    \begin{align*}
      \A_t(0) & = 1, \A_t(1) = \A_t(2) = \A_t(3) = \A_t(4) = 2, \\
      \frac{1}{2} \A_t(4n) & = 3\A_{00}(n) + \A_{010}(n) + \A_{010}(n+1) + \A_{0110}(n+1) & \text{ for } n \geq 2, \\
      \frac{1}{2} \A_t(4n - 2) & = \A_{00}(4(n-1)) + \A_{010}(4n) + \A_{0110}(4n) & \text{ for } n \geq 2, \\
      \A_t(2n + 1) & = 0 & \text{ for } n \geq 2.
    \end{align*}
  \end{theorem}

  \autoref{thm:tm_privileged_complexity} is directly implied by \autoref{prp:no_odd_length} and the following theorem
  which allows the computation of values of $\A_t$.

  \begin{theorem}\label{thm:collected_formulas}
    The functions $\A_{00}(n), \A_{010}(n)$ and $\A_{0110}(n)$ satisfy
    \begin{align*}
      \A_{00}(4n)       & = 2 \A_{00}(n), \\
      \A_{00}(4n - 2)   & = \A_{0110}(4n), \\
      \A_{0110}(4n)     & = \A_{00}(n) + \A_{010}(n), \\
      \A_{0110}(4n - 2) & = \A_{00}(4(n - 1)), \\
      \A_{010}(4n)      & = \A_{010}(n + 1) + \A_{0110}(n + 1), \\
      \A_{010}(4n - 2)  & = \A_{010}(4n)
    \end{align*}
    for all $n \geq 2$.
  \end{theorem}
  \begin{proof}
    The claim follows from Corollaries \ref{cor:00_formula}, \ref{cor:010_formula} and \ref{cor:0110_formula} proven
    below.
  \end{proof}

  \begin{table}
    \centering
    \begin{tabular}{|c|c|c|c|c|c|c|c|}
    \hline
    2-16 & 18-32 & 34-48 & 50-64 & 66-80 & 82-96 & 98-112 & 114-128\\ \hline
    2    & 2     & 14    & 0     & 2     & 0     & 16     & 0\\
    2    & 2     & 6     & 0     & 2     & 0     & 8      & 0\\
    4    & 4     & 4     & 0     & 2     & 4     & 4      & 6\\
    8    & 8     & 8     & 0     & 2     & 12    & 4      & 18\\
    8    & 8     & 8     & 0     & 2     & 12    & 4      & 18\\
    4    & 4     & 4     & 0     & 2     & 4     & 4      & 6\\
    0    & 6     & 2     & 0     & 2     & 4     & 4      & 8\\
    0    & 14    & 2     & 0     & 2     & 12    & 4      & 24\\
    \hline
    \end{tabular}
    \caption{Values $\A_t(n)$ for $n = 2,4,6,8,\ldots,128$ (the even numbers from $2$ to $128$).}
    \label{tbl:values}
  \end{table}

  Using the formulas of Theorems \ref{thm:tm_privileged_complexity} and \ref{thm:collected_formulas} values of $\A_t$
  can be computed. See \autoref{tbl:values}.

  It is interesting to compare the privileged complexity with the palindromic complexity:

  \begin{theorem}\label{thm:tm_palindromic_complexity}\emph{\cite{2003:palindrome_complexity,2008:palindromic_lacunas_of_the_thue-morse_word}}
    The palindromic complexity function $\PP_t(n)$ of the Thue-Morse word satisfies
    \begin{align*}
      \PP_t(0)      & = 1, \PP_t(1) = \PP_t(2) = \PP_t(3) = \PP_t(4) = 2, \\
      \PP_t(4n)     & = \PP_t(4n - 2) = \PP_t(n) + \PP_t(n + 1) & \text{ for } n \geq 2, \\
      \PP_t(2n + 1) & = 0                                       & \text{ for } n \geq 2.
    \end{align*}
  \end{theorem}

  \begin{table}
    \centering
    \begin{tabular}{|l|l|l|}
      \hline
      $\alpha_1 = 00101100$   & $\beta_1 = 01011010$     & $\gamma_1 = 01100110$ \\
      $\alpha_2 = 00110100$   & $\beta_2 = 010110011010$ & $\gamma_2 = 011010010110$ \\
      $\alpha_3 = 001100$     & $\beta_3 = 010010$       & $\gamma_3 = 0110010110$ \\
      $\alpha_4 = 0010110100$ & $\beta_4 = 0100110010$   & $\gamma_4 = 0110100110$ \\
      \hline
    \end{tabular}
    \caption{The set $\R$ of all complete first returns to $00, 010$ and $0110$ in $t$.}
    \label{tbl:returns}
  \end{table}

  In \autoref{tbl:returns} we list the set $\R$ of all complete first returns to $00, 010$ and $0110$. These words are
  needed later on. We leave it to the reader to verify that these words actually are factors of $t$. By
  \autoref{prp:tm_number_of_returns} the list is exhaustive (this fact is also easily verified directly).

  \begin{lemma}\label{lem:r_prefix}
    If $w \in \Pri_0(n)$ with $n > 4$, then $w$ begins with a word in $\R$.
  \end{lemma}
  \begin{proof}
    Since $|w| > 4$, the word $w$ has proper prefix $u$ where $u$ is one of the words $00, 010$ or $0110$. Since $w$ is
    privileged, the word $u$ is also a suffix of $w$, so $w$ must have a complete first return to $u$ as a prefix. The
    claim follows.
  \end{proof}

  We see that we have at least two privileged factors beginning with 0 of odd length, namely $0$ and $010$. It
  turns out that there are no more:

  \begin{proposition}\label{prp:no_odd_length}\emph{\cite{2013:introducing_privileged_words_privileged}}
    $\A_t(2n + 1) = 0$ for $n \geq 2$.
  \end{proposition}
  \begin{proof}
    We may focus on privileged factors beginning with $0$. Let $w, |w| > 4,$ be a privileged factor of $t$ beginning
    with $0$. Now $w$ begins with one of the three privileged words $00$, $010$ or $0110$. With respect to the morphism
    $\varphi$ the bars must be placed as follows: $0|0$, $01|0$ or $0|10$ and $01|10$. Now if $w$ begins with $00$
    (respectively $0110$), then it also ends with $00$ (respectively $0110$), and by the placement of the bars we
    immediately see that $w$ has even length. Assume then that $w$ begins with $010$. As $|w| > 4$, by
    \autoref{lem:r_prefix} $w$ has as a prefix one of the words
    $\beta_1 = 01|01|10|10, \beta_2 = 01|01|10|01|10|10, \beta_3 = 0|10|01|0$ or $\beta_4 = 0|10|01|10|01|0$ (bars with
    respect to $\varphi$). If $w$ begins with some $\beta_i$, then it also ends with $\beta_i$. From the placement of
    the bars we see that $|w|$ is necessarily even.
  \end{proof}

  We frequently need complete information on short privileged factors. The next lemma provides this knowledge. In what
  follows, we assume the lemma to be known.

  \begin{lemma}\label{lem:short_privileged}
    Let $w \in \Pri_0(n)$ with $n \leq 12$. Then $w \in \{0,010,0110\} \cup \R$.
  \end{lemma}
  \begin{proof}
    It was already remarked that if $|w| \leq 4$, then $w \in \{0,010,0110\}$. If $|w| > 4$, then by
    \autoref{lem:r_prefix} the word $w$ begins with a word in $\R$. If $w \notin \R$, then as $w$ is privileged, $w$
    must have as a prefix a complete first return to some word in $\R$. This prefix $u$ cannot overlap with itself.
    Since $|w| \leq 12$, it thus follows that $|u| \leq 6$. The words of minimal length in $\R$ have length $6$, so
    $w = u^2$ and $u \in \{\alpha_3, \beta_3\}$. However, both $\alpha_3^2$ and $\beta_3^2$ contain a third power (that
    is, an overlap) yielding a contradiction. Thus $w \in \R$.
  \end{proof}

  Next we characterize the different classes of privileged factors in the Thue-Morse word. In what follows, we say that
  a word $w$ is \emph{$\theta$-invertible} if there exists a word $u$ such that $\theta(u) = w$. Recall the words
  $\alpha_i, \beta_i$ and $\gamma_i$ from \autoref{tbl:returns}.

  \begin{lemma}\label{lem:00_characterization}
    Let $w \in \Pri_{00}(n)$ for some $n > 2$. Then
    \begin{enumerate}[(i)]
      \item $4 \mid |w| \Longleftrightarrow 1w110$ or $011w1$ is a $\theta$-invertible factor of $t$ $\Longleftrightarrow
            w$ begins with $\alpha_1$ or $\alpha_2$,
      \item $4 \nmid |w| \Longleftrightarrow 1w1$ is a $\theta$-invertible factor of $t$ $\Longleftrightarrow
            w$ begins with $\alpha_3$ or $\alpha_4$.
    \end{enumerate}
  \end{lemma}
  \begin{proof}
    By \autoref{lem:unique_interpretation} all factors of $t$ of length at least $7$ admit a unique interpretation by
    $\theta$, so in $\alpha_1, \alpha_2$ and $\alpha_4$ there is a unique way to place bars:
    $\alpha_1 = 001|0110|0, \alpha_2 = 0|0110|100$ and $\alpha_4 = 001|0110|100$. For $\alpha_3$ there are potentially
    two ways to place bars: $\alpha_3 = 001|100$ and $\alpha_3 = 0|0110|0$. However the latter is not possible as
    $(0110)^3$ is not a factor of $t$.

    \emph{(i)} Assume that $4 \mid |w|$. If $w$ begins with $\alpha_4$, then it also ends with $\alpha_4$.
    From the placement of the bars it can be seen that this is not possible; it would follow that $4 \nmid |w|$.
    Similarly $w$ cannot begin with $\alpha_3$. Hence $w$ must begin with $\alpha_1$ or $\alpha_2$.
    On the other hand if $w$ begins with $\alpha_1$ or $\alpha_2$, then
    $1w110$ or $011w1$ must be $\theta$-invertible by the placement of the bars. Then clearly $4 \mid |w|$.

    \emph{(ii)} Assume that $4 \nmid |w|$. By (i) $w$ has to begin with $\alpha_3$ or $\alpha_4$. In either case
    $1w1$ is a $\theta$-invertible factor of $t$. The other direction is also clear: if $w$ begins with $\alpha_3$ or
    $\alpha_4$, then by (i) it must be that $4 \nmid |w|$.
  \end{proof}

  \begin{lemma}\label{lem:010_characterization}
    Let $w \in \Pri_{010}(n)$ for some $n \geq 1$. Then
    \begin{enumerate}[(i)]
      \item $4 \mid |w| \Longleftrightarrow 10w01$ is a $\theta$-invertible factor of $t$ $\Longleftrightarrow
            w$ begins with $\beta_1$ or $\beta_2$,
      \item $4 \nmid |w|, 2 \mid |w| \Longleftrightarrow 011w110$ is a $\theta$-invertible factor of $t$ $\Longleftrightarrow
            w$ begins with $\beta_3$ or $\beta_4$,
      \item $4 \nmid |w|, 2 \nmid |w| \Longleftrightarrow w = 010$.
    \end{enumerate}
  \end{lemma}
  \begin{proof}
    From \autoref{prp:no_odd_length} it follows that (iii) holds.

    As in the previous proof, we know the placements of bars in the words $\beta_1, \beta_2, \beta_3$ and $\beta_4$:
    $\beta_1 = 01|0110|10, \beta_2 = 01|0110|0110|10, \beta_3 = 0|1001|0$ and $\beta_4 = 0|1001|1001|0$.

    \emph{(i)} Assume that $4 \mid |w|$. As in the previous proof, from the placement of the bars we see that $w$
    cannot begin with $\beta_3$ or $\beta_4$, and hence it must start with $\beta_1$ or $\beta_2$. Thus $10w01$ is
    $\theta$-invertible. Again the unique placement of the bars implies that the converse is also true.

    \emph{(ii)} By (i) it is enough to note that if $w$ begins with $\beta_3$ or $\beta_4$, then $011w110$ is
    $\theta$-invertible.
  \end{proof}

  \begin{lemma}\label{lem:0110_characterization}
    Let $w \in \Pri_{0110}(n)$ for some $n > 4$. Then
    \begin{enumerate}[(i)]
      \item $4 \mid |w| \Longleftrightarrow w$ is a $\theta$-invertible factor of $t$ $\Longleftrightarrow
            w$ begins with $\gamma_1$ or $\gamma_2$,
      \item $4 \nmid |w| \Longleftrightarrow 10w$ or $w01$ is a $\theta$-invertible factor of $t$ $\Longleftrightarrow
            w$ begins with $\gamma_3$ or $\gamma_4$.
    \end{enumerate}
  \end{lemma}
  \begin{proof}
    The placement of bars is known: $\gamma_1 = 0110|0110, \gamma_2 = 0110|1001|0110,
    \gamma_3 = 01|1001|0110$ and $\gamma_4 = 0110|1001|10$. As in the two previous proofs, by looking at the
    placements of the bars the claim straightforwardly follows.
  \end{proof}

  The three previous lemmas allow us to prove the following useful result:

  \begin{corollary}\label{cor:combined_characterization}
    Let $w \in \Pri_t(n)$ and $u$ be its longest privileged proper prefix such that $|u| > 4$. Then $4 \mid |w|$ if and only if
    $4 \mid |u|$.
  \end{corollary}
  \begin{proof}
    Let $\mathcal{S} = \{\alpha_1, \alpha_2, \beta_1, \beta_2, \gamma_1, \gamma_2\}$. Assume that $4 \mid |w|$. The
    lemmas \ref{lem:00_characterization}, \ref{lem:010_characterization} and \ref{lem:0110_characterization} imply that
    $w$ begins with some $v \in \mathcal{S}$. Since $|u| > 4$, it follows from \autoref{lem:r_prefix} that $u$ has as a
    prefix a word in $\R$. Since no word in the set $\R$ is a proper prefix of any word in $\R$, it follows that $u$
    begins with $v$. The same three lemmas now imply that $4 \mid |u|$. On the other hand if $4 \mid |u|$, then $u$
    begins with a word in $\mathcal{S}$. Consequently $w$ begins with a word in $\mathcal{S}$, so $4 \mid |w|$.
  \end{proof}

  Let us then define the following functions $f_1, g_1: \{0,1\}^* \to \{0,1\}^* :
  f_1(w) = \del_{1,3}(\theta(1w))$ and $g_1(w) = \del_{3,1}(\theta(w1))$.

  \begin{lemma}\label{lem:00_reduction}
    Let $n \geq 2$. The function $f_1$ is a bijection $\Pri_{00}(n) \to \Pri_{\alpha_1}(4n)$ and the function $g_1$ is
    a bijection $\Pri_{00}(n) \to \Pri_{\alpha_2}(4n)$.
  \end{lemma}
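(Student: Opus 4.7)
The argument proceeds by induction on $n$; the case of $g_1$ is symmetric, using the identity $\theta(w1) = 011 \cdot g_1(w) \cdot 1$ in place of
\[
  \theta(1w) = 1 \cdot f_1(w) \cdot 110,
\]
which one verifies by direct computation (any $w \in \Pri_{00}(n)$ begins and ends with $00$). From this identity one reads off $f_1(w) = 001 \cdot \theta(w_1 \cdots w_{n-1}) \cdot 0$, so $f_1(w)$ has length $4n$, lies in $\Fac(t)$, and begins with $001 \cdot \theta(00) = 00101100 = \alpha_1$; and $f_1$ is injective because $\theta$ is. The base case $n = 2$ is immediate: $\Pri_{00}(2) = \{00\}$, $\Pri_{\alpha_1}(8) = \{\alpha_1\}$, and $f_1(00) = \alpha_1$, a complete first return to $00$.

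For the inductive step, assume the statement for all lengths less than $n$. Given $w \in \Pri_{00}(n)$, let $u$ be its longest proper privileged prefix; since $w$ starts with $00$ we have $|u| \geq 2$ and $u \in \Pri_{00}(m)$ for some $m < n$, so by induction $f_1(u) \in \Pri_{\alpha_1}(4m)$ is privileged. The central claim is that $f_1(w)$ is a complete first return to $f_1(u)$. The prefix property is immediate because $f_1(w)$ literally begins with $f_1(u) \cdot 110$; the suffix property follows from a short computation after noting that the letter $w_{n-m}$ preceding the terminal copy of $u$ in $w$ equals $1$ (forced since $u$ begins with $00$ and $000 \notin \Fac(t)$), which makes the last $4m$ letters of $f_1(w)$ equal to $001 \cdot \theta(u_1 \cdots u_{m-1}) \cdot 0 = f_1(u)$. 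For the occurrence count, I apply \autoref{prp:theta_number_preserving} to obtain
\[
  |\theta(1w)|_{\theta(1u)} = |1w|_{1u} = |w|_u = 2,
\]
where the last equality uses that every occurrence of $u$ inside $w$ is preceded by $1$ (since $u$ begins with $00$) and that the prefix occurrence is accounted for by the leading $1$ of $1w$. Uniqueness of the $\theta$-interpretation (\autoref{lem:unique_interpretation}) then converts occurrences of $\theta(1u) = 1 \cdot f_1(u) \cdot 110$ inside $\theta(1w) = 1 \cdot f_1(w) \cdot 110$ bijectively to occurrences of $f_1(u)$ inside $f_1(w)$, yielding exactly two. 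Hence $f_1(w)$ is privileged.

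For surjectivity at level $n$, let $w' \in \Pri_{\alpha_1}(4n)$. By \autoref{lem:00_characterization}(i), $1 w' 110 \in \Fac(t)$ matches over $X_\theta$, yielding a unique $1w \in \Fac(t)$ with $\theta(1w) = 1 w' 110$; comparing boundary letters forces $|w| = n$ and $w$ to begin with $00$, so $f_1(w) = w'$. To see $w$ is privileged, let $u'$ be the longest proper privileged prefix of $w'$. By \autoref{cor:combined_characterization} we have $4 \mid |u'|$, so the inductive hypothesis gives $u' = f_1(v)$ for some $v \in \Pri_{00}(|u'|/4)$. Running the preceding computation and count in reverse --- the prefix/suffix correspondence from freeness of $\theta$, the occurrence count again from \autoref{prp:theta_number_preserving} --- shows $w$ is a complete first return to the privileged word $v$, hence $w \in \Pri_{00}(n)$.

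The main obstacle is the bijective matching between occurrences of $f_1(u)$ in $f_1(w)$ and occurrences of $\theta(1u)$ in $\theta(1w)$. The two boundary copies of $f_1(u)$, at positions $1$ and $4n-4m+1$ of $f_1(w)$, extend to occurrences of the longer word $\theta(1u) = 1 \cdot f_1(u) \cdot 110$ only after picking up context from the prepended $1$ and appended $110$ of the ambient strings; to rule out spurious internal occurrences of $f_1(u)$ in $f_1(w)$ that fail to extend, one appeals to the uniqueness of the $\theta$-interpretation.
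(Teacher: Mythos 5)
Your proof is correct and follows essentially the same route as the paper's: induction on $n$, with the forward direction reducing to the longest proper privileged prefix and counting occurrences via \autoref{prp:theta_number_preserving}, and surjectivity via \autoref{lem:00_characterization} together with \autoref{cor:combined_characterization}. The only cosmetic difference is that you handle $g_1$ by a symmetric direct computation, whereas the paper derives it from $f_1$ via the identity $f_1(w)^\sim = g_1(\widetilde{w})$ and $\widetilde{\alpha}_1 = \alpha_2$.
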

  \begin{proof}
    We will first prove the claim for $f_1$. If $n = 2$, then $\Pri_{00}(2) = \{00\}$ and
    $\Pri_{\alpha_1}(8) = \{\alpha_1\}$, so the claim indeed holds. The latter part of this proof shows that if
    $\Pri_{\alpha_1}(4n) \neq \emptyset,$ then also $\Pri_{00}(n) \neq \emptyset$. Thus the claim holds also if
    $n = 3, 4, 5$, as then $\Pri_{00}(n) = \emptyset$. Assume that $n > 5$.

    Let $w \in \Pri_{00}(n)$, and let $v$ be its longest privileged proper prefix. Note that now $|v| \geq 2$. As $v$
    begins with $00$ it follows by induction that $f_1(v) \in \Pri_{\alpha_1}(t)$. As $v$ is always preceded by the
    letter $1$, $w = vw'1v$, and thus
    \begin{align*}
      f_1(w) = \del_{1,3}(\theta(1v)\theta(w')\theta(1v)) = \del_{1,3}(\theta(1v))110 \theta(w') 1 \del_{1,3}(\theta(1v)) = f_1(v) 110 \theta(w') 1 f_1(v).
    \end{align*}
    By \autoref{lem:00_characterization} the factor $f_1(v)$ is always preceded by $1$ and followed by $110$. Thus
    if $f_1(v)$ would occur more than twice in $f_1(w)$, then $\theta(v) = 1 f_1(v) 110$ would occur more than twice in 
    $\theta(w)$ which is impossible by \autoref{prp:theta_number_preserving}. Hence $f_1(w)$ is a complete first
    return to the privileged word $f_1(v)$, and thus $f_1(w) \in \Pri_{\alpha_1}(4n)$.

    Assume then that $w \in \Pri_{\alpha_1}(4n)$. By \autoref{lem:00_characterization} there exists
    $z \in \Fac_t(n + 1)$ such that $\theta(z) = 1w110$. Write $u = \del_{1,0}(z)$. Then $f_1(u) = w$. Let $v$ be the
    longest privileged proper prefix of $w$. The assumption $n > 5$ implies that $|v| > 4$ (the maximum length of a
    word in $\R$ is $12$), so by \autoref{cor:combined_characterization} we have that $4 \mid |v|$.
    Thus by induction there exists $s \in \Pri_{00}(t)$ such that $f_1(s) = v$. Thus
    $f_1(u) = w = f_1(s) \cdots f_1(s)$, and so $u$ begins and ends with $s$. Now if $s$ would occur more than twice
    in $u$, then as $s$ is always preceded by $1$, $f_1(s) = v$ would occur more than twice in $w$ which is impossible.
    Thus $u$ is a complete first return to $s$, so $u \in \Pri_{00}(n)$.

    Now the claim for the function $g_1$ follows as $f_1(w)^\sim = g_1(\mirror{w})$ and $\mirror{\alpha}_1 = \alpha_2$.
  \end{proof}

  \begin{lemma}\label{lem:00_reduction_4n-2}
    Let $n \geq 1$. The function $f_2: \Pri_{00}(4n -2) \to \Pri_{1001}(4n), f_2(w) = 1w1$ is a bijection.
  \end{lemma}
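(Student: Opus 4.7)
The plan is to establish well-definedness and surjectivity of $f_2$ simultaneously by strong induction on $n$; injectivity is immediate, since $f_2(w) = 1w1$ is just sandwiching with $1$'s. The base case $n = 1$ is a direct check: $\Pri_{00}(2) = \{00\}$ and $\Pri_{1001}(4) = \{1001\}$, and $f_2(00) = 1001$ is a complete first return to the privileged letter $1$.

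For the inductive step of well-definedness, fix $w \in \Pri_{00}(4n-2)$ with $n \geq 2$ and let $v$ be the longest proper privileged prefix of $w$. Since $00$ is itself a privileged proper prefix of $w$, we have $|v| \geq 2$, so $v$ starts with $00$ and hence lies in $\Pri_{00}$; by \autoref{cor:combined_characterization}, $|v| = 4m - 2$ for some $m < n$, and the inductive hypothesis gives $1v1 \in \Pri_{1001}(4m)$. I would then show that $1w1$ is a complete first return to $1v1$. The key structural fact is that $v$ begins and ends with $00$ (using the general principle that a privileged prefix of a privileged word is also a suffix), so in the overlap-free Thue-Morse word $t$ every non-boundary occurrence of $v$ must be flanked by the letter $1$ on both sides --- otherwise the forbidden factor $000$ would appear. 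Writing $w = vXv$ then forces $X$ to begin and end with $1$, so $1v1$ sits as both a prefix and a suffix of $1w1$; moreover every occurrence of $v$ inside $1w1$ lies strictly within $w$ (its terminal $0$'s cannot align with the boundary $1$'s of $1w1$), and each such internal occurrence is surrounded by $1$'s. This yields a bijection between occurrences of $v$ in $w$ and occurrences of $1v1$ in $1w1$, so $1v1$ appears exactly twice. Finally, \autoref{lem:00_characterization}(ii) ensures that $1w1$ matches over $X_\theta$ and therefore begins with the block $1001$, placing $1w1 \in \Pri_{1001}(4n)$.

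For surjectivity in the inductive step, given $u \in \Pri_{1001}(4n)$ with $n \geq 2$, set $w := u[2, 4n-1]$. Since $1001$ is a privileged prefix of $u$, it is also a suffix, so $u = 1w1$ and $w$ starts with $00$ and has length $4n-2$. The longest proper privileged prefix $u'$ of $u$ cannot be the letter $1$, as that would force $u = 1 0^k 1 = 1001$ by overlap-freeness; hence $|u'| \geq 4$ and $u' \in \Pri_{1001}$. Then \autoref{cor:combined_characterization} gives $|u'| = 4m$ with $m < n$, and by induction $u' = 1v1$ for some $v \in \Pri_{00}(4m-2)$. Writing $u = u' Y u'$, the same flanking correspondence applied in reverse converts the two occurrences of $u' = 1v1$ in $u$ into exactly two occurrences of $v$ in $w$, so $w$ is a complete first return to $v$ and $w \in \Pri_{00}(4n-2)$ with $f_2(w) = u$.

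The main obstacle is this flanking/counting correspondence between occurrences of $1v1$ in $1w1$ and occurrences of $v$ in $w$. The forward direction is automatic; the reverse direction uses overlap-freeness of $t$ crucially, both to flank every internal occurrence of $v$ with $1$'s and to rule out $v$ sitting at the extreme ends of $1w1$, where its outer $0$'s would clash with the boundary $1$'s. Everything else --- the base case, the passage from $w$ to $v$, and the length-modulo-$4$ bookkeeping via \autoref{cor:combined_characterization} --- is comparatively routine.
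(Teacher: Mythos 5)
Your proof is correct and takes essentially the same route as the paper's: induction on $n$, the observation that $00$ (hence $v$) is always flanked by $1$'s because $000$ is not a factor of $t$, the occurrence-counting argument showing $1w1$ is a complete first return to $1v1$, and \autoref{cor:combined_characterization} for the length bookkeeping in both directions. You simply spell out the occurrence correspondence and the surjectivity step in more detail than the paper's terse version does.
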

  \begin{proof}
    If $n = 1$, then $\Pri_{00}(2) = \{00\}$ and $\Pri_{1001}(4) = \{1001\}$, so the claim holds. The cases
    $n = 2,3$ are also clear: $\Pri_{00}(6) = \{\alpha_3\}, \Pri_{1001}(8) = \{E(\gamma_1)\},
    \Pri_{00}(10) = \{\alpha_4\}$ and $\Pri_{1001}(12) = \{E(\gamma_2)\}$; see \autoref{lem:short_privileged}.
    Assume that $n > 3$.
    
    Let $w \in \Pri_{00}(4n - 2)$. As the factor $00$ is always preceded and followed by the letter $1$, $1w1 \in
    \Fac(t)$, and $1w1$ begins and ends with $1001$. Let $v$ be the longest privileged proper prefix of $w$. The
    assumption $n > 3$ implies that $|v| > 4$, so by \autoref{cor:combined_characterization} it holds that
    $4 \nmid |v|$. Thus by induction the word $1v1$ is privileged. The word $1w1$ is a complete first return to $1v1$,
    as otherwise $w$ would contain more than two occurrences of $v$. Thus $1w1 \in \Pri_{1001}(4n)$.

    Let then $1w1 \in \Pri_{1001}(4n)$. Again by applying \autoref{cor:combined_characterization} and induction to the
    longest privileged proper prefix of $1w1$ we get that $w \in \Pri_{00}(4n - 2)$.
  \end{proof}

  \begin{corollary}\label{cor:00_formula}
    $\A_{00}(4n) = 2 \A_{00}(n)$ and $\A_{00}(4n - 2) = \A_{0110}(4n)$ for all $n \geq 2$.
  \end{corollary}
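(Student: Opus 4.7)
The plan is to read off both equalities from the two preceding lemmas, together with the characterization in \autoref{lem:00_characterization} and the $0 \leftrightarrow 1$ symmetry of $t$.

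For the first equality, I would start from \autoref{lem:00_characterization}(i): every $w \in \Pri_{00}(4n)$ begins with $\alpha_1$ or with $\alpha_2$. Since $\alpha_1 = 00101100$ and $\alpha_2 = 00110100$ already disagree at their fourth letter, the sets $\Pri_{\alpha_1}(4n)$ and $\Pri_{\alpha_2}(4n)$ are disjoint, and hence
\begin{align*}
  \A_{00}(4n) = |\Pri_{\alpha_1}(4n)| + |\Pri_{\alpha_2}(4n)|.
\end{align*}
The bijections $f_1 \colon \Pri_{00}(n) \to \Pri_{\alpha_1}(4n)$ and $g_1 \colon \Pri_{00}(n) \to \Pri_{\alpha_2}(4n)$ provided by \autoref{lem:00_reduction} make each summand equal to $\A_{00}(n)$, which gives $\A_{00}(4n) = 2\A_{00}(n)$.

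For the second equality, \autoref{lem:00_reduction_4n-2} gives the bijection $f_2 \colon \Pri_{00}(4n-2) \to \Pri_{1001}(4n)$, so $\A_{00}(4n-2) = |\Pri_{1001}(4n)|$. To finish, I would invoke the fact that $t$ is closed under the letter-exchange morphism $\sigma \colon 0 \mapsto 1,\ 1 \mapsto 0$; since $\sigma$ preserves the property of being a complete first return to a shorter factor, it restricts to a bijection on privileged factors, and it maps $1001$ to $0110$. This gives a bijection $\Pri_{1001}(4n) \to \Pri_{0110}(4n)$, so $|\Pri_{1001}(4n)| = \A_{0110}(4n)$ and the second identity follows.

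There is no real obstacle here: both parts are just bookkeeping. The only thing to verify carefully is the disjointness $\Pri_{\alpha_1}(4n) \cap \Pri_{\alpha_2}(4n) = \emptyset$ and that \autoref{lem:00_characterization}(i) actually forces $w$ to begin with one of $\alpha_1$, $\alpha_2$ (and not merely with $00$), both of which are immediate from the stated characterization.
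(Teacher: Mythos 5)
Your proof is correct and follows essentially the same route as the paper: decompose $\Pri_{00}(4n)$ into the disjoint classes $\Pri_{\alpha_1}(4n)$ and $\Pri_{\alpha_2}(4n)$ via \autoref{lem:00_characterization}, apply the bijections $f_1$ and $g_1$ of \autoref{lem:00_reduction}, and for the second identity combine \autoref{lem:00_reduction_4n-2} with the equality $\A_{1001}(m)=\A_{0110}(m)$ coming from closure of $t$ under the exchange $0\leftrightarrow 1$. The extra details you supply (disjointness via the fourth letter, the explicit symmetry argument) are exactly the implicit steps the paper leaves to the reader.
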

  \begin{proof}
    As the ranges of the functions $f_1: \Pri_{00}(n) \to \Pri_{\alpha_1}(4n)$ and
    $g_n: \Pri_{00}(n) \to \Pri_{\alpha_2}$ are disjoint, the claim follows since by \autoref{lem:00_characterization}
    $\Pri_{00}(4n) = \Pri_{\alpha_1}(4n) \cup \Pri_{\alpha_2}(4n)$. The other equality follows directly from
    \autoref{lem:00_reduction_4n-2} as $\A_{1001}(m) = \A_{0110}(m)$ for all $m \geq 4$.
  \end{proof}

  \begin{lemma}\label{lem:010_reduction}
    Let $n \geq 2$. The function $f_3: \Pri_{101}(n + 1) \cup \Pri_{1001}(n + 1) \to \Pri_{010}(4n), f_3(w) = \del_{2,2}(\theta(w))$ is a bijection.
  \end{lemma}
  \begin{proof}
    If $n = 2$, then $\Pri_{101}(3) = \{101\}, \Pri_{1001}(3) = \emptyset$ and $\Pri_{010}(8) = \{\beta_1\}$.
    If $n = 3$, then $\Pri_{101}(4) = \emptyset, \Pri_{1001}(4) = \{1001\}$ and $\Pri_{0110}(12) = \{\beta_2\}$. We
    may assume that $n > 3$.

    Let $w \in \Pri_{101}(n + 1) \cup \Pri_{1001}(n + 1)$, and $v$ its longest privileged proper prefix. By induction
    $f_3(v) \in \Pri_{010}(t)$. As $v$ is a prefix and a suffix of $w$, $f_3(w)$ starts and ends with $f_3(v)$. By
    \autoref{lem:010_characterization} the word $f_3(v)$ is always preceded by $10$ and followed by $01$. Thus if
    $f_3(w)$ contained more than two occurrences of $f_3(v)$, then \autoref{prp:theta_number_preserving} would imply
    that $w$ contains more than two occurrences of $v$ which would be a contradiction. We conclude that
    $f_3(w) \in \Pri_{010}(4n)$.
    
    Let then $w \in \Pri_{010}(4n)$. By \autoref{lem:010_characterization} there is such a word $u$ that $f_3(u) = w$.
    Let $v$ be the longest privileged proper prefix of $w$. By the assumption $n > 3$, we have that $|v| > 4$. By
    \autoref{cor:combined_characterization} we can apply induction to obtain a word
    $s \in \Pri_{101}(t) \cup \Pri_{1001}(t)$ such that $f_3(s) = v$. Therefore $f_3(u) = w = f_3(s) \cdots f_3(s)$.
    By \autoref{lem:010_characterization} $f_3(s)$ is always preceded by $10$ and followed by $01$. Therefore $u$
    begins and ends with $s$. Now if $u$ would contain a third occurrence of $s$, then $w$ would contain a third
    occurrence of $v$, which is not possible. Hence $u \in \Pri_{101}(n + 1) \cup \Pri_{1001}(n + 1)$.
  \end{proof}

  \begin{lemma}\label{lem:010_reduction_4n-2}
    Let $n \geq 2$. The function $f_4: \Pri_{101}(4n - 2) \to \Pri_{010}(4n), f_4(w) = 0w0$ is a bijection.
  \end{lemma}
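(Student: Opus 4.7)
The plan is to mirror the proof of Lemma~\ref{lem:00_reduction_4n-2} by induction on $n$. The key fact playing the role of overlap-freeness (which there forces $00$ to be preceded and followed by $1$) will be: every privileged factor $v \in \Pri_{101}$ of length $\equiv 2 \pmod 4$ is always preceded by $0$ and followed by $0$ in $t$. I will first settle the base cases $n = 2$ and $n = 3$ by direct inspection: because the only complete first returns to $101$ have lengths among $\{6, 8, 10, 12\}$ (the $0$-$1$ swaps of $\beta_1, \beta_2, \beta_3, \beta_4$), $\Pri_{101}(4n - 2)$ consists of a single word in each case, and $f_4$ sends it to the unique element of $\Pri_{010}(4n)$, namely $\beta_1$ for $n = 2$ and $\beta_2$ for $n = 3$.

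For the inductive step with $n \geq 4$, I take $w \in \Pri_{101}(4n - 2)$ and first invoke the symmetric version of \autoref{lem:010_characterization}(ii) to conclude that $100w001$ is a factor of $t$, so $0w0 \in \Fac(t)$ and clearly begins with $010$. Then I analyze the longest privileged proper prefix $v$ of $w$: using \autoref{cor:combined_characterization} together with the observation that no complete first return to $101$ has length $\geq 14$, I will force $|v| \geq 6$ and $|v| \equiv 2 \pmod 4$, i.e.\ $|v| = 4m - 2$ for some $2 \leq m < n$. The inductive hypothesis then yields $0v0 \in \Pri_{010}(4m)$. Since $v$ is always preceded and followed by $0$ in $t$, the decomposition $w = v \alpha v$ (from Lemma~2 of the preliminaries) gives $\alpha$ beginning and ending with $0$, so $0v0$ is both a prefix and a suffix of $0w0$. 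Any further occurrence of $0v0$ inside $0w0$ would, because $v$ begins with $1$, correspond to an internal occurrence of $v$ in $w$, contradicting the CFR property. Hence $0w0 \in \Pri_{010}(4n)$.

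For the reverse direction, I take $w' \in \Pri_{010}(4n)$ and use \autoref{lem:010_characterization}(i) to see that $w'$ begins with $\beta_1$ or $\beta_2$, both of which begin with $0101$ and end with $1010$; therefore $w' = 0w0$ with $w := w'[2..|w'|-1]$ starting and ending with $101$ and of length $4n - 2$. A symmetric case analysis of the longest privileged proper prefix $v'$ of $w'$ gives $|v'| = 4m$ with $2 \leq m < n$, and by induction $v' = 0v0$ for a unique $v \in \Pri_{101}(4m - 2)$; the symmetric version of the forward argument then shows $w$ is a complete first return to the privileged factor $v$, hence privileged. The main obstacle I expect is justifying that every occurrence of such a $v$ in $t$ extends to $0v0$: \autoref{lem:010_characterization}(ii) as stated only guarantees existence of the single factor $100v001$, so I will combine it with the uniqueness of the $\theta$-interpretation from \autoref{lem:unique_interpretation} (which handles $|v| \geq 7$) and settle the lone short case $|v| = 6$ by direct verification.
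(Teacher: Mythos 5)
Your proposal is correct and follows essentially the same inductive argument as the paper: base cases $n=2,3$ by inspection, then for $n\geq 4$ use \autoref{cor:combined_characterization} to pin down the length of the longest privileged proper prefix, apply the induction hypothesis, and transfer the complete-first-return property via the fact that these factors are always preceded and followed by $0$. You are in fact somewhat more careful than the paper, which silently treats ``$100v001$ is a factor'' as ``every occurrence of $v$ extends to $100v001$''; your explicit appeal to the uniqueness of the $\theta$-interpretation (plus the short case $|v|=6$) is exactly the right way to close that gap.
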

  \begin{proof}
    If $n = 2$, then $\Pri_{101}(6) = \{E(\beta_3)\}$ and $\Pri_{010}(8) = \{\beta_1\}$. If $n = 3$, then
    $\Pri_{101}(10) = \{E(\beta_4)\}$, and $\Pri_{010}(12) = \{\beta_2\}$. Thus it can be assumed that $n > 3$.

    Let $w \in \Pri_{101}(4n - 2)$, and $v$ its longest privileged proper prefix. As $n > 3$, we have that $|v| > 4$.
    By \autoref{cor:combined_characterization} and induction $f_4(v) \in \Pri_{010}(t)$. By
    \autoref{lem:010_characterization} the factor $f_4(v)$ is always preceded and followed by letter $0$. Thus it can
    be written that $f_4(w) = f_4(v) \cdots f_4(v)$. If there was a third occurrence of $f_4(v)$ in $f_4(w)$, then in
    $w$ there would be at least three occurrences of $v$, which is false. Therefore $f_4(w) \in \Pri_{010}(4n)$.

    Let $0w0 \in \Pri_{010}(4n)$. Again, by applying \autoref{cor:combined_characterization} and induction to the
    longest privileged proper prefix of $0w0$, we get that $w \in \Pri_{101}(4n - 2)$.
  \end{proof}

  \begin{corollary}\label{cor:010_formula}
    $\A_{010}(4n) = \A_{010}(n + 1) + \A_{0110}(n + 1)$ and $\A_{010}(4n - 2) = \A_{010}(4n)$ for all $n \geq 2$.
  \end{corollary}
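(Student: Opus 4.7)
The plan is to deduce both identities directly from \autoref{lem:010_reduction} and \autoref{lem:010_reduction_4n-2}, combined with the closure of $t$ under the letter swap $0 \mapsto 1,\ 1 \mapsto 0$ recorded earlier in this section. The first step I would take is to translate that closure into the equalities $\A_{010}(m) = \A_{101}(m)$ and $\A_{0110}(m) = \A_{1001}(m)$ for all relevant $m$: if $w$ is a privileged factor of $t$ beginning with $010$ (respectively $0110$), then its bitwise complement is a privileged factor of $t$ beginning with $101$ (respectively $1001$) of the same length, and conversely.

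For the first identity, \autoref{lem:010_reduction} supplies the bijection $f_3 : \Pri_{101}(n+1) \cup \Pri_{1001}(n+1) \to \Pri_{010}(4n)$. The two sets on the left are disjoint because any element of $\Pri_{101}(n+1)$ has third letter $1$ while any element of $\Pri_{1001}(n+1)$ has third letter $0$. Hence $\A_{010}(4n) = \A_{101}(n+1) + \A_{1001}(n+1)$, and invoking the letter-swap symmetry on each summand converts the right-hand side into $\A_{010}(n+1) + \A_{0110}(n+1)$, which is the required formula.

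For the second identity, \autoref{lem:010_reduction_4n-2} gives the bijection $f_4 : \Pri_{101}(4n-2) \to \Pri_{010}(4n)$, so $\A_{010}(4n) = \A_{101}(4n-2)$, and the symmetry rewrites the right-hand side as $\A_{010}(4n-2)$. The technical work has already been carried out in the two supporting lemmas, so no substantive obstacle remains; the only point that deserves explicit care is that the hypothesis $n \geq 2$ keeps each index above the thresholds where the lemmas were established (so that $f_3$ and $f_4$ are in fact bijections on the stated domains) and above the lengths at which the letter-swap symmetry is meaningful.
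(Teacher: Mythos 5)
Your proposal is correct and follows exactly the paper's route: both identities are read off from the bijections of \autoref{lem:010_reduction} and \autoref{lem:010_reduction_4n-2} together with the letter-swap symmetry $\A_{101}(m)=\A_{010}(m)$, $\A_{1001}(m)=\A_{0110}(m)$. The extra remarks on the disjointness of $\Pri_{101}(n+1)$ and $\Pri_{1001}(n+1)$ and on the index thresholds are sensible but do not change the argument.
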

  \begin{proof}
    This follows directly from Lemmas \ref{lem:010_reduction} and \ref{lem:010_reduction_4n-2} as
    $\A_{101}(n) = \A_{010}(n)$ and $\A_{1001}(n) = \A_{0110}(n)$ for all $n \geq 0$.
  \end{proof}

  \begin{lemma}\label{lem:0110_reduction}
    Let $n \geq 2$. The function $\theta: \Pri_{00}(n) \cup \Pri_{010}(n) \to \Pri_{0110}(4n)$ is a bijection.
  \end{lemma}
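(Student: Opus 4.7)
The plan is to follow the induction template established in Lemmas \ref{lem:00_reduction}, \ref{lem:010_reduction}, and \ref{lem:010_reduction_4n-2}. First I would verify the base cases: for $n=2$, compute $\theta(00) = 01100110 = \gamma_1 \in \Pri_{0110}(8)$; for $n=3$, compute $\theta(010) = 011010010110 = \gamma_2 \in \Pri_{0110}(12)$; for $n=4$, note that $\Pri_{00}(4) \cup \Pri_{010}(4) = \emptyset$ (the only privileged factors of length $4$ are $0110$ and $1001$) and $\Pri_{0110}(16) = \emptyset$ (using the list of complete first returns to $0110$ and \autoref{cor:combined_characterization} to rule out any privileged extension of length $16$ starting with $0110$). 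Then I would proceed by induction for $n \geq 5$.

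For the forward direction, given $w \in \Pri_{00}(n) \cup \Pri_{010}(n)$, let $v$ be its longest proper privileged prefix. Since $w$ begins with $00$ or $010$ and the shorter privileged factors are $\varepsilon, 0, 00, 010$, the prefix $v$ also begins with $00$ or $010$ and satisfies $|v| \geq 2$. The induction hypothesis gives $\theta(v) \in \Pri_{0110}(4|v|)$. Writing the complete first return $w = v w' v$, we get $\theta(w) = \theta(v) \theta(w') \theta(v)$, so $\theta(w)$ begins and ends with $\theta(v)$. A third occurrence of $\theta(v)$ inside $\theta(w)$ would, by \autoref{prp:theta_number_preserving}, pull back to a third occurrence of $v$ inside $w$, contradicting that $w$ is a complete first return to $v$. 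Hence $\theta(w) \in \Pri_{0110}(4n)$.

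For the backward direction, let $w \in \Pri_{0110}(4n)$. Since $4 \mid 4n$, \autoref{lem:0110_characterization}(i) gives that $w$ matches over $X_\theta$, so $w = \theta(u)$ for a unique $u$ of length $n$. Reading the first $\theta$-block of $w$ shows $u$ starts with $0$; the second block is $\theta(0) = 0110$ or $\theta(1) = 1001$, so $u$ begins with $00$ or $01$, and in the latter case (combined with $u$ ending in $0$, $|u| \geq 3$) $u$ begins with $010$. Now let $v$ be the longest proper privileged prefix of $w$; by \autoref{cor:combined_characterization}, $4 \mid |v|$. Since the complete first returns to $0110$ all have length $\leq 12 < 20 \leq |w|$, the case $v = 0110$ (length $4$) is excluded, so $|v| \geq 8$ and the induction hypothesis applies, yielding $s \in \Pri_{00}(|v|/4) \cup \Pri_{010}(|v|/4)$ with $\theta(s) = v$. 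Since $\theta(s)$ is both a prefix and a suffix of $\theta(u) = w$, the unique interpretability of long factors (\autoref{lem:unique_interpretation}) forces $s$ to be a prefix and a suffix of $u$. A third occurrence of $s$ inside $u$ would, by \autoref{prp:theta_number_preserving}, produce a third occurrence of $v$ inside $w$, contradicting that $w$ is a complete first return to $v$. Therefore $u$ is a complete first return to the privileged word $s$, and so $u \in \Pri_{00}(n) \cup \Pri_{010}(n)$.

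The main obstacle I anticipate is the bookkeeping in the backward direction: verifying that the preimage $u$ genuinely begins with $00$ or $010$ rather than some other prefix. This rests on correctly reading off the first two $\theta$-blocks of $w$ and on the clean $X_\theta$-factorization guaranteed by \autoref{lem:0110_characterization}(i). A secondary subtlety is ensuring that for $n \geq 5$ the longest proper privileged prefix $v$ of $w$ is long enough (namely $|v| \geq 8$) for the induction hypothesis to apply; this follows because every complete first return to $0110$ has length strictly less than $|w|$.
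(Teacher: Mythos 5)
Your proof is correct and follows essentially the same route as the paper's own (which is considerably terser): base cases, then induction on the longest proper privileged prefix, using \autoref{cor:combined_characterization} to align lengths and \autoref{prp:theta_number_preserving} to transfer occurrence counts between $u$ and $\theta(u)$. The only cosmetic difference is the $n=4$ base case, where the paper deduces $\Pri_{0110}(16)=\emptyset$ directly from the surjectivity argument (since $\Pri_{00}(4)\cup\Pri_{010}(4)=\emptyset$) rather than from a separate inspection of returns, which is slightly cleaner than the check you sketch.
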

  \begin{proof}
    If $n = 2$, then $\Pri_{00}(2) = \{00\}, \Pri_{010}(2) = \emptyset$ and $\Pri_{0110}(8) = \{\gamma_1\}$. If
    $n = 3$, then $\Pri_{00}(3) = \emptyset, \Pri_{010}(3) = \{010\}$ and $\Pri_{0110}(12) = \{\gamma_2\}$. By the
    argument at the end of the proof if $\Pri_{0110}(4n) \neq \emptyset$, then
    $\Pri_{00}(n) \cup \Pri_{010}(n) \neq \emptyset$. As $\Pri_{00}(n) \cup \Pri_{010}(n) = \emptyset$ when $n = 4$,
    it can be further assumed that $n > 4$.

    Let $w \in \Pri_{00}(n) \cup \Pri_{010}(n)$, and $v$ its longest privileged proper prefix. Now $|v| \geq 2$ as
    $n > 4$. Once again by induction $\theta(v) \in \Pri_{0110}(t)$. By \autoref{prp:theta_number_preserving} the word
    $\theta(w)$ must be a complete first return to $\theta(v)$, that is, $\theta(w) \in \Pri_{0110}(4n)$.

    Let $w \in \Pri_{0110}(4n)$. By \autoref{lem:0110_characterization} there is $u \in \Fac_t(n)$ such that
    $\theta(u) = w$. Again by \autoref{cor:combined_characterization} and induction there exists
    $s \in \Pri_{00}(t) \cup \Pri_{010}(t)$ such that $\theta(s) = v$ where $v$ is the longest privileged proper prefix
    of $w$. It follows that $u$ must be a complete first return to $s$, so $u \in \Pri_{00}(n) \cup \Pri_{010}(n)$.
  \end{proof}

  \begin{lemma}\label{lem:0110_reduction_4n-2}
    Let $n \geq 2$. The function $f_4: \Pri_{11}(4n) \to \Pri_{0110}(4n + 2), f_4(w) = 0w0$ is a bijection.
  \end{lemma}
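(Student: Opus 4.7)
The plan is to proceed by induction on $n$, following closely the template of \autoref{lem:00_reduction_4n-2} and \autoref{lem:010_reduction_4n-2}. The essential combinatorial input is overlap-freeness of $t$, which implies $111 \notin \Fac(t)$; consequently, every occurrence in $t$ of a factor that begins and ends with $11$ is automatically flanked by $0$'s on both sides, so the map $w \mapsto 0w0$ preserves occurrence counts: for any factor $u$ beginning and ending with $11$, the occurrences of $u$ in a larger factor $X$ correspond bijectively to the occurrences of $0u0$ in $0X0$.

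For the base case $n = 2$, I would verify the bijection by direct computation. Using the $0 \leftrightarrow 1$ symmetry of $t$ together with \autoref{lem:00_characterization}(i), the set $\Pri_{11}(8)$ consists of exactly the two complete first returns to $11$ of length $8$, namely the letter-exchange images of $\alpha_1$ and $\alpha_2$; and by \autoref{lem:0110_characterization}(ii), $\Pri_{0110}(10) = \{\gamma_3, \gamma_4\}$. A direct check then shows that $f_4$ sends these two elements bijectively to $\gamma_4$ and $\gamma_3$ respectively.

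For the inductive step with $n \geq 3$, in the forward direction let $w \in \Pri_{11}(4n)$ and let $v$ be its longest privileged proper prefix. Since $111 \notin \Fac(t)$, $w$ starts with $110$ and ends with $011$, so $0w0 \in \Fac(t)$ begins and ends with $0110$. \autoref{cor:combined_characterization} yields $4 \mid |v|$, and the $0 \leftrightarrow 1$ analog of \autoref{lem:00_characterization}(i) forces $|v| \geq 8$; the induction hypothesis then gives $0v0 \in \Pri_{0110}(|v|+2)$. Because every occurrence of $v$ in $t$ is flanked by $0$'s, the two occurrences of $v$ in $w$ correspond bijectively to the occurrences of $0v0$ in $0w0$, so $0w0$ is a complete first return to $0v0$, hence an element of $\Pri_{0110}(4n+2)$. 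In the reverse direction, given $w \in \Pri_{0110}(4n+2)$, \autoref{lem:0110_characterization}(ii) (applicable since $4 \nmid 4n+2$) forces $w$ to begin with $\gamma_3$ or $\gamma_4$ and, being privileged, to end with $0110$; hence $w = 0u0$ for a unique $u$ of length $4n$ beginning and ending with $11$. Letting $v$ be the longest privileged proper prefix of $w$, \autoref{cor:combined_characterization} together with \autoref{lem:0110_characterization}(ii) gives $|v| = 4m+2$ with $2 \leq m < n$, so by induction $v = 0s0$ for some $s \in \Pri_{11}(4m)$. The same flanking argument identifies occurrences of $s$ in $u$ with occurrences of $v = 0s0$ in $w = 0u0$, so $u$ is a complete first return to $s$, giving $u \in \Pri_{11}(4n)$.

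Injectivity of $f_4$ is immediate from its definition, and surjectivity is exactly the content of the reverse direction. The main technical subtlety lies in the length bookkeeping needed so that the induction hypothesis applies at each step: one must verify that the inductive $v$ satisfies $|v| \geq 8$ in the forward step and $|v| \geq 10$ in the reverse step, with the correct residue modulo $4$. Both bounds fall out of the characterization lemmas, since the shortest elements of $\Pri_{11}$ whose length is divisible by $4$ are the two length-$8$ complete first returns to $11$, while the shortest elements of $\Pri_{0110}$ of length $\equiv 2 \pmod 4$ are $\gamma_3$ and $\gamma_4$, each of length $10$.
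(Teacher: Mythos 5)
Your proof is correct and follows essentially the same route as the paper, which handles the base case $n=2$ by exhibiting $\Pri_{11}(8)=\{E(\alpha_1),E(\alpha_2)\}$ and $\Pri_{0110}(10)=\{\gamma_3,\gamma_4\}$ and then runs the induction "along the lines of" \autoref{lem:00_reduction_4n-2}; your flanking argument (occurrences of a factor beginning and ending with $11$ are forced by overlap-freeness to be surrounded by $0$'s, so occurrence counts are preserved under $w\mapsto 0w0$) is exactly the mechanism that sketch relies on. You have simply written out the details the paper leaves implicit, including the length bookkeeping via \autoref{cor:combined_characterization} and the characterization lemmas.
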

  \begin{proof}
    If $n = 2$, then $\Pri_{11}(8) = \{E(\alpha_1), E(\alpha_2)\}$ and $\Pri_{0110}(10) = \{\gamma_3, \gamma_4\}$.
    If $n = 3$, then $\Pri_{11}(12) = \emptyset$ and $\Pri_{0110}(14) = \emptyset$. The latter set is empty because if
    $w \in \Pri_{0110}(14)$, then $w$ has as a prefix and a suffix a word $u$ which is a complete first return to
    $0110$. Since $|u| \geq 8$, these two occurrences of $u$ in $w$ overlap. This is contradictory.

    The rest of the proof is by induction along the lines of the proof of \autoref{lem:00_reduction_4n-2}.
  \end{proof}

  \begin{corollary}\label{cor:0110_formula}
    $\A_{0110}(4n) = \A_{00}(n) + \A_{010}(n)$ and $\A_{0110}(4n - 2) = \A_{00}(4(n - 1))$ for $n \geq 2$.
  \end{corollary}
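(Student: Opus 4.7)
The plan is to derive both equalities as more or less immediate cardinality consequences of the two preceding Lemmas, since the hard work (establishing the bijections) has already been done.

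For the first equality $\A_{0110}(4n) = \A_{00}(n) + \A_{010}(n)$, I would invoke \autoref{lem:0110_reduction}, which gives a bijection $\theta: \Pri_{00}(n) \cup \Pri_{010}(n) \to \Pri_{0110}(4n)$. The key observation to spell out is that the union on the left is disjoint: a word in $\Pri_{00}(n)$ begins with the letters $00$, while a word in $\Pri_{010}(n)$ begins with $010$, so the two sets cannot share an element. Taking cardinalities then yields $\A_{0110}(4n) = |\Pri_{00}(n)| + |\Pri_{010}(n)| = \A_{00}(n) + \A_{010}(n)$.

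For the second equality $\A_{0110}(4n - 2) = \A_{00}(4(n-1))$, I would apply \autoref{lem:0110_reduction_4n-2} with index $n-1$ in place of $n$ (valid since $n \geq 2$ gives $n-1 \geq 1$, and actually the reduction lemma assumes $n \geq 2$, so I may need to treat the small case $n = 2$ separately or verify that the bijection argument extends). This gives $\A_{0110}(4(n-1) + 2) = \A_{11}(4(n-1))$, i.e., $\A_{0110}(4n - 2) = \A_{11}(4(n-1))$. Then I would invoke the symmetry of the Thue-Morse word under the letter-exchange morphism $0 \mapsto 1,\, 1 \mapsto 0$ (noted in the text just before the definition of $\Pri_u(n)$) to conclude that $\A_{11}(m) = \A_{00}(m)$ for every $m$, giving the claimed equality.

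I do not anticipate any real obstacle: the only subtlety is verifying that the case $n = 2$ in the second equality is covered, since shifting the index by one puts us at the boundary of where the lemma applies. If the lemma as stated does not literally cover this edge case, a direct inspection using the explicit values $\Pri_{0110}(6) = \emptyset$ and $\Pri_{00}(4) = \emptyset$ (both follow from the characterizations in \autoref{lem:00_characterization} and \autoref{lem:0110_characterization} and the table preceding the lemma chain) will settle it.
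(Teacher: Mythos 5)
Your proof is correct and follows essentially the same route as the paper: read off cardinalities from the bijections of \autoref{lem:0110_reduction} and \autoref{lem:0110_reduction_4n-2} and use the letter-exchange symmetry $\A_{11}(m)=\A_{00}(m)$. You are in fact slightly more careful than the paper, which silently glosses over the $n=2$ boundary case of the second equality that your index shift exposes; your direct check $\Pri_{0110}(6)=\emptyset=\Pri_{00}(4)$ settles it.
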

  \begin{proof}
    The result follows from Lemmas \ref{lem:0110_reduction} and \ref{lem:0110_reduction_4n-2} as $\A_{00}(n) = \A_{11}(n)$
    for all $n \geq 0$.
  \end{proof}

  \autoref{prp:no_odd_length} and Corollaries \ref{cor:00_formula}, \ref{cor:010_formula} and \ref{cor:0110_formula}
  together prove \autoref{thm:tm_privileged_complexity}. In the following we characterize the non-primitive privileged
  factors:

  \begin{proposition}\label{prp:primitive_privileged}
    The only non-primitive privileged factors of $t$ beginning with $0$ are $00, \beta_3, \gamma_1$ and $\gamma_2^2$.
  \end{proposition}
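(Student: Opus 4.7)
The plan has two stages: first reduce to squares $u^2$ with $u$ primitive and privileged, then enumerate the admissible $u$ starting with $0$ by induction on $|u|$, using the $\theta$-interpretation machinery developed in this section.

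For the first stage, overlap-freeness of $t$ excludes $u^k$ for $k \geq 3$ (since $u^k$ contains $u_1 (u_2 \cdots u_{|u|}) u_1 (u_2 \cdots u_{|u|}) u_1$, an overlap of the form $axaxa$), so any non-primitive factor of $t$ is a square $u^2$ with $u$ primitive. To see that $u$ must itself be privileged, I would take $v$ to be the longest privileged proper prefix of $u^2$ and rule out the cases $|v| < |u|$ and $|v| > |u|$. The first case gives four distinct occurrences of $v$ in $u^2$ (at positions $1$, $|u|-|v|+1$, $|u|+1$, $2|u|-|v|+1$), contradicting the complete first return property. The second case lets me write $v = up = su$ with $|p| = |s| = k = |v|-|u|$, forcing $v$ (as a factor of $t$) to have period $|u|-k$ and $u$ to have period $k$; overlap-freeness then gives $|v| \leq 2(|u|-k)$ and $|u| \leq 2k$, which combine to $3k \leq |u| \leq 2k$, a contradiction. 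So $u = v$ is privileged.

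For the second stage I would induct on $|u|$, splitting by the prefix class of $u$. The $\Pri_{00}$ case is straightforward: induction on the recursive definition shows every $u \in \Pri_{00}$ ends with $00$, so $u^2$ contains $0000 \notin \Fac(t)$, and the only contribution is the base case $u = 0$ giving $u^2 = 00$. The $\Pri_{010}$ and $\Pri_{0110}$ cases require the $\theta$-interpretation: by \autoref{lem:unique_interpretation} every $u$ of length at least $7$ has a unique $\theta$-interpretation, and \autoref{lem:010_characterization}, \autoref{lem:0110_characterization} pin down its shape. For $u \in \Pri_{010}$ with $|u| = 4n$ I would use the bijection $u = f_3(w) = \del(\theta(w), 2, 2)$ from \autoref{lem:010_reduction}; the assumption $u^2 \in \Fac(t)$ would place two copies of the ancestor $w$ (of length $n+1$) at distance $n$ in $t$, yielding a factor of length $2n+1$ with period $n$---the forbidden overlap $axaxa$. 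For $u \in \Pri_{010}$ with $|u| = 4n-2$ the interpretation has shape $(a, 3, 3)$, so every occurrence of $u$ sits at a position divisible by $4$, incompatible with the shift $4n-2$ between the two halves of $u^2$; only $u = 010$ survives, giving $\beta_3$.

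For $u \in \Pri_{0110}$ with $|u| > 4$: when $4 \nmid |u|$, the same position-modulo-$4$ argument applied via \autoref{lem:0110_characterization}(ii) rules out $u^2 \in \Fac(t)$. When $4 \mid |u|$, \autoref{lem:0110_reduction} gives $u = \theta(u')$ with $u' \in \Pri_{00}(|u|/4) \cup \Pri_{010}(|u|/4)$, and $u^2 = \theta(u'^2) \in \Fac(t)$ iff $u'^2 \in \Fac(t)$, with $u$ primitive iff $u'$ is. By the inductive hypothesis, the only admissible $u'$ lying in $\Pri_{00} \cup \Pri_{010}$ is $010$, giving $u = \gamma_2$ and $u^2 = \gamma_2^2$; the base case $u = 0110$ contributes $\gamma_1$. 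I expect the main obstacle to be the overlap argument for $\Pri_{010}(4n)$: converting the complete first return property of $u^2$---via the bijection $f_3$---into a concrete length-$(2n+1)$ period-$n$ factor forbidden by overlap-freeness is the subtlest step, since it requires combining the local interpretation data with the global overlap-free structure of $t$.
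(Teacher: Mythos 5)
Your proof is correct in substance and follows the same overall skeleton as the paper's (reduce to $w=u^2$ with $u$ privileged, then classify $u$ by its prefix $00$, $010$ or $0110$ using the $\theta$-interpretation), but several steps are executed genuinely differently, mostly to your advantage. Your first stage supplies an actual proof of the reduction ``$w=u^2$ with $u$ privileged'', which the paper merely asserts as clear; the four-occurrence argument in the case $|v|<|u|$ is exactly right. In the case $|v|>|u|$ your period bookkeeping slips: from $v=up=su$ the word $u$ is a \emph{border} of $v$, so $v$ has period $k=|v|-|u|$, not $|u|-k$ (getting period $|u|-k$ would require an extra appeal to Fine--Wilf). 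The correct period only makes the contradiction easier: overlap-freeness gives $|v|=|u|+k\le 2k$, i.e.\ $|u|\le k<|u|$. In the second stage, for $u\in\Pri_{010}(4n)$ the paper argues that $u$ begins and ends with $\beta_1$ or $\beta_2$ and checks directly that $\beta_1^2,\beta_2^2\notin\Fac(t)$, whereas you pull $u^2$ back through $f_3$ and \autoref{prp:theta_number_preserving} to two occurrences of the ancestor at distance $n<n+1$, i.e.\ a forbidden overlap --- a more systematic argument requiring no case inspection. For the $4n-2$ classes the paper dispatches everything at once via \autoref{cor:combined_characterization} ($|u|$ even forces $4\mid|w|$, hence $4\mid|u|$), while your position-modulo-$4$ argument is a direct rendering of the same fact. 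Finally, your $\theta$-descent for $\Pri_{0110}(4n)$ (reduce $u^2\in\Fac(t)$ to $u'^2\in\Fac(t)$ and invoke the already-settled $00$ and $010$ cases) is cleaner than the paper's hands-on analysis of $\theta^2(010)$ as a central factor. The only points to tidy are the period slip above and the implicit use, in the modulo-$4$ and ancestor arguments, of the fact that \autoref{lem:unique_interpretation} forces every occurrence of a sufficiently long factor to start at the same residue modulo $4$.
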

  \begin{proof}
    Let $w$ be a non-primitive privileged factor of $t$ beginning with $0$. Since $t$ is overlap-free, it cannot
    contain third powers. Thus $w = u^2$ for some privileged factor $u$. If $|u| = 1$, then $w = 00$. If $|u| > 1$,
    then $u$ cannot begin with $00$ as otherwise $w$ would have $0^4$ as a central factor. Hence $u$ begins with $010$
    or $0110$. If $|u| = 3,4$, then $w = \beta_3, \gamma_1$. We may assume that $|u| > 5$. Then as $|u|$ is even, we
    have that $4 \mid |w|$, so $4 \mid |u|$ by \autoref{cor:combined_characterization}. By
    \autoref{lem:010_characterization} if $u$ begins with $010$, then $u$ begins with $\beta_1$ or $\beta_2$, and so
    $w$ has $\beta_1^2$ or $\beta_2^2$ as a central factor. This is, however, impossible as neither $\beta_1^2$ nor
    $\beta_2^2$ is a factor of $t$. Thus $u$ must begin with $0110$, and begin with $\gamma_1$ or $\gamma_2$. The word
    $\gamma_1$ is non-primitive, and thus $\gamma_1^2 \notin \Fac(t)$. As $\gamma_2$ is primitive, and $\gamma_2^2$ is
    not, we may assume that $u$ begins with $\gamma_2$ and that $u \neq \gamma_2$. Thus $w$ has
    $\gamma_2^2 = \theta(010)^2$ as a central factor. By \autoref{lem:0110_reduction} $w = \theta(v)$ where $v$ is a
    privileged word beginning with $010$. As $(010)^2$ must be preceded by $011$ and followed by $110$, we have that
    $w$ has $\theta(011010 \cdot 010110) = \theta^2(010)$ as a central factor. As $010$ cannot be preceded and followed
    by the same letter, we have that $v$ ends with $\beta_1$ and begins with $\beta_2$, or symmetrically $v$ ends with
    $\beta_2$ and begins with $\beta_1$. Either case is impossible as $v$ is privileged.
  \end{proof}

  Next we study the asymptotic behavior of the function $\A_t$.

  \begin{proposition}\label{prp:privileged_unbounded}
    $\underset{n \to \infty}\limsup \A_t(n) = \infty$ and $\underset{n \to \infty}{\liminf \vphantom{p}} \A_t(n) = 0$.
  \end{proposition}
  \begin{proof}
    The fact that the inferior limit is $0$ already follows from \autoref{prp:no_odd_length}. We will prove that
    when $n \geq 6$,
    \begin{align*}
      \A_t(2^n) =
      \begin{cases}
        3\cdot2^{\frac{1}{2}(n - 1)}, &\text{ if } n \text{ is odd,}\\
        0, &\text{ if } n \text{ is even,}
      \end{cases}
    \end{align*}
    which proves the claim.

    As $2^{n-2} + 1$ is odd, by \autoref{cor:010_formula} it holds
    for $n > 3$ that
    \begin{align*}
      \A_{010}(2^n) = \A_{010}(2^{n-2}+1) + \A_{0110}(2^{n-2} + 1) = 0.
    \end{align*}
    Thus by \autoref{thm:tm_privileged_complexity}
    \begin{align*}
      \frac{1}{2}\A_t(2^n) = 3\A_{00}(2^{n-2}).
    \end{align*}
    Now $\A_{00}(2) = 1$ and $\A_{00}(4) = 0$. By \autoref{cor:00_formula} $\A_{00}(2^n) = 2\A_{00}(2^{n-2})$, so
    for all $n \geq 1$,
    \begin{align*}
      \A_{00}(2^n) =
      \begin{cases}
        2^{\frac{1}{2}(n - 1)}, &\text{ if } n \text{ is odd},\\
        0,                      &\text{ if } n \text{ is even},
      \end{cases}
    \end{align*}
    proving the desired equality.
  \end{proof}

  This result is very interesting. It can be proven that the palindromic complexity function of a fixed point of a
  primitive morphism is bounded (see
  \cite{2000:palindrome_complexity_bounds_for_primitive_substitution,2003:palindrome_complexity}). So as the Thue-Morse
  word is a fixed point of a primitive morphism, the above results demonstrate that in general the palindromic
  complexity function and the privileged complexity function of an infinite word can behave radically differently.

  Our next aim is to show that there exist arbitrarily long gaps of zeros in the values of $\A_t$. For this we need
  several lemmas.
  
  Let us define an integer sequence $(a_n)$ as follows: $a_1 = 14$ and $a_n = 4(a_{n-1} - 2) + 2(-1)^n$ for
  $n > 1$.  The first few terms of the sequence are $14, 50, 190, 754, 3006, \ldots$ Note that $a_n$ is always even,
  and not divisible by $4$.

  \begin{lemma}\label{lem:gaps_left_end}
    If $n$ is even, then $\A_{00}(a_n - 2) = \A_{010}(a_n - 2) = 0$ and $\A_{0110}(a_n - 2) = 1$.
    If $n > 1$ is odd, then $\A_{00}(a_n - 2) = \A_{0110}(a_n - 2) = 0$ and $\A_{010}(a_n - 2) = 1$. Moreover if
    $n > 1$, then $\A_t(a_n - 2) = 2$.
  \end{lemma}
  \begin{proof}
    Using the formulas of Corollaries \ref{cor:00_formula}, \ref{cor:010_formula} and \ref{cor:0110_formula} it is readily
    verified that the claim holds for $n = 2$.

    Let $n > 1$ be odd. Then $a_n = 4(a_{n-1} - 2) - 2$. Using induction, \autoref{prp:no_odd_length} and the formulas
    of Corollaries \ref{cor:00_formula}, \ref{cor:010_formula} and \ref{cor:0110_formula} we get that
    \begin{align*}
      \A_{00}(a_n - 2)   &= 2\A_{00}(a_{n-1} - 3) = 0,\\
      \A_{010}(a_n - 2)  &= \A_{010}(a_{n-1} - 2) + \A_{0110}(a_{n-1} - 2) = \A_{0110}(a_{n-1} - 2) = 1 \text{ and }\\
      \A_{0110}(a_n - 2) &= \A_{00}(a_{n-1} - 3) + \A_{010}(a_{n-1} - 3) = 0.
    \end{align*}

    Let $n$ be even, so $a_n = 4(a_{n-1} - 2) + 2$. Similarly as above
    \begin{align*}
      \A_{00}(a_n - 2)   &= 2\A_{00}(a_{n-1} - 2) = 0, \\
      \A_{010}(a_n - 2)  &= \A_{010}(a_{n-1} - 1) + \A_{0110}(a_{n-1} - 1) = 0 \text{ and }\\
      \A_{0110}(a_n - 2) &= \A_{00}(a_{n-1} - 2) + \A_{010}(a_{n-1} - 2) = \A_{010}(a_{n-1} - 2) = 1.
    \end{align*}
    
    It clearly follows that $\A_t(a_n - 2) = 2$ for $n > 1$.
  \end{proof}

  By \autoref{lem:short_privileged} it can be verified that $\A_t(12) = 4$, so $\A_t(a_n - 2) \neq 0$ for all $n \geq 1$.

  %


  \begin{lemma}\label{lem:gaps_right_end}
    $\A_t(2^n + 2) = 2$ for all even $n \neq 2$.
  \end{lemma}
  \begin{proof}
    We will prove that $\A_{00}(2^n + 2) = \A_{0110}(2^n + 2) = 0$ and $\A_{010}(2^n + 2) = 1$. The claim follows from
    this. This claim is readily verified when $n = 0$. Let then $n > 2$ be even. By the formulas of Corollaries
    \ref{cor:00_formula} and \ref{cor:0110_formula} we obtain that
    \begin{align*}
      \A_{00}(2^n + 2) = \A_{0110}(2^n + 4) = \A_{00}(2^{n-2} + 1) + \A_{010}(2^{n-2} + 1) = 0
    \end{align*}
    as $2^{n-2} + 1$ is odd and greater than $3$ when $n > 2$ is even. Similarly using in addition
    \autoref{cor:010_formula} we get by applying induction that
    \begin{align*}
      \A_{010}(2^n + 2)  &= \A_{010}(2^n + 4) = \A_{010}(2^{n-2} + 2) + \A_{0110}(2^{n-2} + 2) = 1 \text{ and } \\
      \A_{0110}(2^n + 2) &= \A_{00}(2^n) = 0,
    \end{align*}
    where the last equality was proven already in the proof of \autoref{prp:privileged_unbounded}.
  \end{proof}

  Finally we have proven enough lemmas in order to prove the following result:

  \begin{proposition}\label{prp:gaps_formula}
    For all $n \geq 1$ if $a_n - 1 \leq k \leq 2^{2(n + 1)} + 1$, then $\A_t(k) = 0$. Also for all $n \geq 1$
    $\A_t(a_n - 2) \neq 0$ and $\A_t(2^{2(n+1)} + 2) \neq 0$.
  \end{proposition}
  \begin{proof}
    By inspection it can be verified that indeed if $a_1 - 1 = 13 \leq k \leq 17 = 2^{2(1 + 1))} + 1$, then $\A_t(k) = 0$.
    Let $n > 1$. Assume that $a_n \leq k \leq 2^{2(n + 1)}$. If $k$ is odd, then $\A_t(k) = 0$. Suppose that $k$ is even.
    Assume that $4$ divides $k$. Then it follows that $a_{n-1} - 1 \leq k/4 \leq 2^{2n}$, so by the induction hypothesis
    $\A_t(k/4) = 0$. By \autoref{thm:tm_privileged_complexity}
    \begin{align*}
      \frac{1}{2} \A_t(k) & = 3\A_{00}(k/4) + \A_{010}(k/4) + \A_{010}(k/4+1) + \A_{0110}(k/4+1) = 0.
    \end{align*}
    Assume that $4$ does not divide $k$. Now $a_{n-1} - 1 \leq \frac{k + 2}{4} \leq 2^{2n}$ and $a_{n-1} - 2 \leq \frac{k - 2}{4} \leq 2^{2n}$.
    Now using the already familiar formulas we get
    \begin{align*}
      \frac{1}{2}\A_t(k) &= \A_{00}(k - 2) + \A_{010}(k + 2) + \A_{0110}(k + 2) \\
                         &= 2 \A_{00}\left(\frac{k - 2}{4}\right) + \A_{010}\left(\frac{k+2}{4} + 1\right) + \A_{0110}\left(\frac{k+2}{4} + 1\right) +
                            \A_{00}\left(\frac{k+2}{4}\right) + \A_{010}\left(\frac{k+2}{4}\right) \\
                         &= 2\A_{00}\left(\frac{k-2}{4}\right),
    \end{align*}
    where the last equality follows from the induction hypothesis. Now if $k \neq a_n$, then $a_{n - 1} - 1 \leq \frac{k-2}{4}$, so
    by the induction hypothesis $\A_t(k) = 0$. If $k = a_n$, then $\frac{k-2}{4} = a_{n-1} - 2$. Then however by \autoref{lem:gaps_left_end}
    $\A_{00}(a_{n-1} - 2) = 0$, so also $\A_t(k) = 0$.
    
    The claim now follows as $a_n - 1$ and $2^{2(n + 1)} + 1$ are odd. Earlier it was proved that
    $\A_t(a_n - 2), \A_t(2^{2(n+1)} + 2) \neq 0$.
  \end{proof}

  Straightforwardly using induction it can be proved that for $n \geq 3$ it holds that $a_n < 2^{2n + 1} + 2^{2n} < 2^{2(n + 1)}$, so
  in particular if for $n \geq 3$ it holds that $2^{2n + 1} + 2^{2n} \leq k \leq 2^{2(n + 1)}$, then $\A_t(k) = 0$. This verifies the
  following result which was conjectured in \cite{2013:introducing_privileged_words_privileged}:

  \begin{corollary}\label{cor:zero_gaps}
    There exists arbitrarily long (but not infinite) gaps of zeros in the values of the privileged complexity function of the Thue-Morse word. \qed
  \end{corollary}

  \autoref{cor:zero_gaps} raises a natural question: If the privileged complexity function of a word $w$ contains arbitrarily
  large gaps of zeros, does it follow that $\limsup_{n \to \infty} \A_w(n) = \infty$? It is conceivable that the large
  gaps force large values of $\A_w(n)$ between the gaps. On the other hand the gaps could occur so sparsely that
  $\A_w(n)$ is still bounded. The author was not able to answer this question.

  The privileged complexity function of the Thue-Morse word is complicated. Even though the Thue-Morse morphism has
  really nice properties, finding the recursive formula for the function is a long task. On the other hand without the
  nice properties of the morphism, the work may not have been possible at all. Indeed if the morphism were not uniform,
  then it would have been harder to calculate the length of the privileged factors. Other crucial property of the
  morphism is its \emph{circularity}: every image of a letter is uniquely determined by its first or last letter. The
  author thinks that it could be possible to obtain results on the privileged complexity of fixed points of primitive
  uniform circular morphisms other than Thue-Morse.

  \section{Privileged Palindrome Complexity}\label{sec:privileged_palindrome}
  In this section we will focus on the privileged palindrome complexity function of the Thue-Morse word $t$. This
  function $\B_t$ counts the number of $n$-length factors of $t$ which are both privileged and palindromic. The
  arguments given are similar to those of \autoref{sec:thue-morse}.
  
  Similar to \autoref{sec:thue-morse} we write $\M_u(n) = \Pri_t(n) \cap \Pal_t(n) \cap u\cdot\{0,1\}^*$,
  and $\B_u(n) = |\M_u(n)|$. Again it suffices to consider factors beginning with letter $0$.

  \begin{lemma}\label{lem:palindrome_matches}
    Let $w \in \Pal(t)$. Then $w \in \Pal_t(4n)$ for some $n \geq 1$ if and only if $w$ is $\varphi$-invertible (i.e.,
    there exists a word $u \in \Fac(t)$ such that $\varphi(u) = w$).
  \end{lemma}
  \begin{proof}
    The claim holds for all palindromes of length less than or equal to $4$, they are: $0,1,00,11,010,101,0110$ and
    $1001$. Suppose that $n \geq 2$.

    Let $w \in \Pal_t(4n)$ be a shortest palindrome that is not $\varphi$-invertible. Suppose first that $w$ begins with
    $00$, that is, $w = 001w'100$. Now $1w'1 \in \Pal(4(n-1))$, so by the minimality of $|w|$ the word $1w'1$ is
    $\varphi$-invertible.  As $w$ begins with $00$, the word $1w'1$ must have two interpretations by $\varphi$. This is
    a contradiction with \autoref{lem:unique_interpretation}.  Say $w$ begins with $01$ (the case where it begins with
    $10$ is symmetric), so it can be written that $w = 01w'10$. As $w$ is not $\varphi$-invertible neither is $w'$
    (otherwise $w$ would have two interpretations by $\varphi$) which is a contradiction with the minimality of $|w|$.

    Suppose then that $w$ is the shortest $\varphi$-invertible palindrome such that $4 \nmid |w|$. It can be written
    that $w = 01w'10$ (or symmetrically $w = 10w'01$), so $w'$ is a palindrome of length $|w| - 4$ which is
    $\varphi$-invertible. This contradicts the choice of $w$.
  \end{proof}

  Let us have a closer look at the function $\theta$ and the functions
  \begin{align*}
    f_2&: w \mapsto 1w1 \\
    f_3&: w \mapsto \del_{2,2}(\theta(w)) \\
    f_4&: w \mapsto 0w0
  \end{align*}
  defined in Lemmas \ref{lem:00_reduction_4n-2}, \ref{lem:010_reduction} and \ref{lem:010_reduction_4n-2}.  Obviously
  all functions preserve palindromes. By Lemmas \ref{lem:00_reduction_4n-2}, \ref{lem:010_reduction},
  \ref{lem:010_reduction_4n-2}, \ref{lem:0110_reduction} and \ref{lem:0110_reduction_4n-2} the functions
  $f_2, f_3, f_4$ and $\theta$ preserve privileged words. Hence we have that the following functions are bijections:
  \begin{align*}
    f_2&: \M_{00}(4n-2) \to \M_{1001}(4n), w \mapsto 1w1, \\
    f_3&: \M_{101}(n+1) \cup \M_{1001}(n+1) \to \M_{010}(4n), w \mapsto \del_{2,2}(\theta(w)), \\
    f_4&: \M_{101}(4n-2) \to \M_{010}(4n), w \mapsto 0w0, \\
    f_4&: \M_{11}(4n) \to \M_{0110}(4n + 2), w \mapsto 0w0, \\
    \theta&: \M_{00}(n) \cup \M_{010}(n) \to \M_{0110}(4n), w \mapsto \theta(w).
  \end{align*}
  We have thus proved the following formulas:
  \begin{align*}
    \B_{00}(4n-2)  &= \B_{0110}(4n), \\
    \B_{010}(4n-2) &= \B_{010}(4n), \\
    \B_{0110}(4n-2)&= \B_{00}(4(n-1)), \\
    \B_{010}(4n)   &= \B_{010}(n+1) + \B_{0110}(n+1), \\
    \B_{0110}(4n)  &= \B_{00}(n) + \B_{010}(n),
  \end{align*}
  for $n \geq 2$. We are still missing a formula for $\B_{00}(4n)$. However $\M_{00}(4n) = \emptyset$ by
  \autoref{lem:palindrome_matches}, so $\B_{00}(4n) = 0$.  By putting together these formulas we get the following
  result:

  \begin{theorem}\label{thm:palandrome_formula}
    The privileged palindrome complexity function $\B_t$ of the Thue-Morse word satisfies
    \begin{align*}
      \B_t(0) & = 1, \B_t(1) = \B_t(2) = \B_t(3) = \B_t(4) = 2, \\
      \frac{1}{2} \B_t(4n) & = \B_{00}(n) + \B_{010}(n) + \B_{010}(n+1) + \B_{0110}(n+1) & \text{ for } n \geq 2, \\
      \B_t(4n - 2) & = \B_t(4n) & \text{ for } n \geq 2, \\
      \B_t(2n + 1) & = 0 & \text{ for } n \geq 2.
    \end{align*} \qed
  \end{theorem}

  As in the previous section for the function $\A_t$, we study next the asymptotic behavior of the function $\B_t$ and
  the occurrences of zeros in its values.

  Let us define an integer sequence $(b_n)$ as follows: $b_1 = 6$ and $b_n = 4b_{n-1} - 2$ for $n > 1$. The first few
  terms of the sequence are $6, 22, 86, 342, 1366, \ldots$ Note that $b_n$ is always even, and not divisible by $4$.

  \begin{lemma}\label{lem:4_sequence}
    $\B_t(b_n) = 4$ for all $n \geq 1$.
  \end{lemma}
  \begin{proof}
    By inspection $\B_{00}(6) = 1, \B_{010}(6) = 1$ and $\B_{0110}(6) = 0$, so $\B(6) = 4$. We will prove that
    $\B_{00}(b_n) = 2$ and $\B_{010}(b_n) = \B_{0110}(b_n) = 0$ for all $n > 1$. The claim follows from this. Now
    \begin{align*}
      \B_{00}(b_n)   &= \B_{0110}(b_n + 2) = \B_{00}(b_{n-1}) + \B_{010}(b_{n-1}) \\
      \B_{010}(b_n)  &= \B_{010}(b_n + 2) = \B_{010}(b_{n-1} + 1) + \B_{0110}(b_{n-1} + 1), \\
      \B_{0110}(b_n) &= \B_{00}(b_n - 2) = \B_{00}(b_{n-1} - 1).
    \end{align*}
    So the claim is indeed true.
  \end{proof}

  \begin{proposition}\label{prp:palandrome_values}
    The function $\B_t$ takes values in $\{0,1,2,4\}$, and the values $0,2$ and $4$ are attained infinitely often.
  \end{proposition}
  \begin{proof}
    As $\B_t(0) = 1, \B_t(1) = \B_t(2) = \B_t(3) = \B_t(4) = 2$, by \autoref{thm:palandrome_formula} we need only to
    consider the values $\B_t(4n)$. If $n = 2$, then $\B_t(8) = 4$. Let $n > 4$ be even. Then by
    \autoref{thm:palandrome_formula}
    \begin{align*}
      \frac{1}{2} \B_t(4n) = \B_{00}(n) + \B_{010}(n).
    \end{align*}
    It suffices to prove that if $\B_{00}(n) \neq 0$ then $\B_{010}(n) = 0$. We are only interested in the case
    where $n$ is not divisible by $4$, as otherwise $\B_{00}(n) = 0$. Now
    \begin{align*}
      \B_{00}(n)  &= \B_{0110}(n + 2) = \B_{00}\left(\frac{n+2}{4}\right) + \B_{010}\left(\frac{n+2}{4}\right) \text{ and } \\
      \B_{010}(n) &= \B_{010}(n+2) = \B_{010}\left(\frac{n+2}{4} + 1\right) + \B_{0110}\left(\frac{n+2}{4} + 1\right).
    \end{align*}
    Clearly if $\B_{00}(n) \neq 0$, then $(n+2)/4$ is even, and thus $\B_{010}(n) = 0$.

    Let $n$ be odd. Then
    \begin{align*}
      \frac{1}{2} \B_t(4n) = \B_{010}(n + 1) + \B_{0110}(n + 1).
    \end{align*}
    Again it suffices to prove that if $\B_{010}(n) \neq 0$, then $\B_{0110}(n) = 0$. As $\B_{0110}(n) = 0$, when
    $n$ is not divisible by $4$, we need to consider only the case where $n$ is divisible by $4$. Now
    \begin{align*}
      \B_{010}(n)  &= \B_{010}\left(\frac{n}{4} + 1\right) + \B_{0110}\left(\frac{n}{4} + 1\right) \text{ and } \\
      \B_{0110}(n) &= \B_{00}\left(\frac{n}{4}\right) + \B_{010}\left(\frac{n}{4}\right).
    \end{align*}
    Obviously if $\B_{010}(n) \neq 0$, then $n/4$ is odd, and thus $\B_{0110}(n) = 0$.

    By \autoref{lem:4_sequence} the function $\B_t$ takes value $4$ infinitely often. Moreover the arguments of Lemmas
    \ref{lem:gaps_left_end} and \ref{lem:gaps_right_end} work if the function $\A_t$ is replaced with the function
    $\B_t$. Thus the value $2$ is also attained infinitely often.
  \end{proof}

  Let us now consider the gaps of zeros in the values of $\B_t$ like we did for the function $\A_t$ in
  \autoref{prp:gaps_formula}. It is clear by \autoref{prp:gaps_formula} that if $a_n - 1 \leq k \leq 2^{2(n+1)} + 1$,
  then $\B_t(k) = 0$. The arguments of Lemmas \ref{lem:gaps_left_end} and \ref{lem:gaps_right_end} work if the function
  $\A_t$ is replaced with the function $\B_t$ (in the proof of the latter lemma, the last equality follows now more
  easily as $\B_{00}(4n) = 0$ for all $n \geq 1$), so $\B_t(a_n - 2), \B_t(2^{2(n+1)}+2) \neq 0$ for all $n \geq 1$.
  Therefore the function $\B_t$ has the same gaps as the function $\A_t$ described by \autoref{prp:gaps_formula}, the
  gaps do not widen.

  \section{A Comparison of Palindromes and Privileged Words}\label{sec:comparison}
  In this section we compare the behavior of palindromes and privileged words and the behavior of the respective
  complexity functions in general.

  Let $w$ be an infinite word. In \cite{2013:introducing_privileged_words_privileged} and
  \cite{2013:a_characterization_of_subshifts_with_bounded_powers} the following relation between the sets $\Pal(w)$ and
  $\Pri(w)$ was proved:

  \begin{proposition}\label{prp:rich_privileged=palindrome}\emph{\cite{2013:introducing_privileged_words_privileged,2013:a_characterization_of_subshifts_with_bounded_powers}}
    A finite or infinite word $w$ is rich if and only if $\Pri(w) = \Pal(w)$.
  \end{proposition}

  First we strengthen this result a bit. Now if a word is rich, then from the previous proposition it obviously follows
  that the palindromic and privileged complexity functions of $w$ coincide. Next we prove the surprising fact that the
  converse is also true. We start with a lemma which is interesting in its own right.

  \begin{lemma}\label{lem:defect_shortest_inclusion}
    Let $w$ be a finite or infinite word. If $w$ is not rich, then there exists a shortest privileged factor $u$ which
    is not a palindrome. Moreover $\Pal_w(n) = \Pri_w(n)$ for all $n$ such that $0 \leq n < |u|$ and
    $\Pal_w(|u|) \subsetneq \Pri_w(|u|)$.
  \end{lemma}
  \begin{proof}
    If $w$ is not rich, then there exists a position $n$ such that no new palindrome in position $n$ is introduced.
    However position $n$ introduces a new privileged factor, which thus cannot be a palindrome. Hence there exists a
    shortest privileged factor $u$ which is not a palindrome. By the minimality of $|u|$ it follows that
    $\Pri_w(n) \subseteq \Pal_w(n)$ for all $n$ such that $0 \leq n < |u|$. Let $p, |p| > 1,$ be a minimal length
    palindrome which is not privileged. Let $q$ be the longest proper palindromic suffix of $p$. By minimality $q$ is
    privileged. As $p$ is not privileged, it has as a suffix a complete first return to $q$, say $v$. As $q$ is the
    longest palindromic suffix of $p$, $v$ is not a palindrome. By minimality of $|u|$ we have that
    $|p| > |v| \geq |u|$, so $\Pal_w(n) \subseteq \Pri_w(n)$ for all $n$ such that $0 \leq n \leq |u|$.
  \end{proof}

  \begin{theorem}
    A finite or infinite word $w$ is rich if and only if $\PP_w(n) = \A_w(n)$ for all $n$ such that $0 \leq n \leq |w|$.
  \end{theorem}
  \begin{proof}
    The fact that the condition is necessary follows from \autoref{prp:rich_privileged=palindrome}. Assume that
    $\PP_w(n) = \A_w(n)$ for all $n$ such that $0 \leq n \leq |w|$. If $w$ is not rich, then by
    \autoref{lem:defect_shortest_inclusion} there exists such $n$ that $\Pal_w(n) \subsetneq \Pri_w(n)$, so
    $\PP_w(n) < \A_w(n)$, which is a contradiction. Therefore $w$ is rich.
  \end{proof}

  For instance, the Thue-Morse word has, as factors, the word $00101100$ which is privileged and not palindromic, and
  the palindrome $00101100110100$ which is not privileged. Thus for a word $w$ its possible that neither of the sets
  $\Pal(w)$ and $\Pri(w)$ is included in the other. By \autoref{lem:defect_shortest_inclusion} it follows that if
  $\Pri(w) \subseteq \Pal(w)$, then $\Pri(w) = \Pal(w)$, i.e., $w$ is rich. Next it is natural to ask if there are
  examples of infinite words $w$ such that $\Pal(w)$ is properly contained in $\Pri(w)$. It turns out that this is
  possible, but not in the case of uniformly recurrent words containing infinitely many palindromes. We begin with a
  simple observation.

  \begin{lemma}\label{lem:pal_inclusion_defect}
    Let $w$ be a recurrent infinite word. If $\Pal(w) \subsetneq \Pri(w)$, then $w$ has infinite defect.
  \end{lemma}
  \begin{proof}
    As $\Pal(w) \subsetneq \Pri(w)$ there exists a privileged factor $u$ which is not a palindrome. Consider any factor
    $v$ which is a complete first return to $u$ in $\Fac(w)$ (such a factor exists as $w$ is recurrent). Let $z$ be a
    prefix of $w$ having $v$ as a suffix. Let $p$ be the longest palindromic suffix of $z$. By the assumption
    $\Pal(w) \subsetneq \Pri(w)$ the palindrome $p$ is also privileged. If $|p| > |u|$, then $p$ has $\mirror{u}$ as as
    a prefix. Since $p$ has a privileged suffix $u$, it also has $u$ as a prefix, so $\mirror{u} = u$, which is
    impossible. Thus $|p| < |u|$, so $p$ is actually the longest palindromic suffix of $u$. As $z$ contains two
    occurrences of $u$, it follows that the longest palindromic suffix of $z$ is not unioccurrent. As $w$ is recurrent,
    there are infinitely many prefixes of $w$ having $v$ as a suffix. Thus $w$ has infinite defect.
  \end{proof}

  Next we define an infinite binary word $\kappa = \lim_{n \to \infty} u_n$ as the limit of the sequence
  $u_0 = 00101100$, $u_{n+1} = u_n 0^n u_n$. It is clear that $\kappa$ is recurrent and aperiodic, and contains
  infinitely many palindromes of the form $0^n$. The word $\kappa$ is, however, not closed under reversal as
  $(1011)^\sim = 1101$ is not a factor.  We claim that $\Pal_\kappa(n) = \{0^n, 10^{n-2} 1\}$ for $n \geq 7$. Let
  $p \in \Pal_\kappa(n)$ for $n \geq 7$, and $m$ be minimal such that $p$ occurs in $u_m$. As $u_{m-1}$ starts and ends
  with $00101100$, and $1101$ is not a factor of $\kappa$, we conclude that $p$ must be a central factor of $u_m$.
  There are thus only two possibilities, $0^m$ or $10^{m-2} 1$. By direct inspection the reader can verify that
  $\Pal_w(6) = \{0^6\}, \Pal_w(5) = \{0^5\}, \Pal_w(4) = \{0000, 0110\}$ and $\Pal_w(3) = \{000,010,101\}$. Thus we
  have proved the following:

  \begin{lemma}\label{lem:recurrent_pal_inclusion}
    There exists an infinite recurrent aperiodic binary word $w$ having the following properties: $w$ is not closed
    under reversal, $w$ contains infinitely many palindromes and $\Pal(w) \subsetneq \Pri(w)$. \qed
  \end{lemma}

  Now let us consider the \emph{Chacon word} $\lambda$, the fixed point of the (non-primitive) morphism
  $0 \mapsto 0010, 1 \mapsto 1$ \cite{1995:les_transformations_de_chacon_combinatoire}. The word $\lambda$ is
  aperiodic, and uniformly recurrent, as the letter $0$ occurs in bounded gaps. By a direct verification one can show
  that the word $\lambda$ does not contain palindromes of length $13$ or $14$. Therefore $\Pal_\lambda(n) = \emptyset$
  for all $n \geq 13$. There are total $23$ palindromes in $\lambda$. Using the same brute-force approach one can show
  that all palindromes in $\lambda$ are privileged. The Chacon word is not closed under reversal: for instance
  $(100100)^\sim = 001001 \notin \Fac(\lambda)$ as $001001$ cannot be properly factored over the set $\{0010, 1\}$. We
  have:

  \begin{lemma}\label{lem:unif_recurrent_not_closed_pal_inclusion}
    There exists an infinite uniformly recurrent aperiodic binary word $w$ having the following properties: $w$ is not
    closed under reversal, $w$ contains finitely many palindromes and $\Pal(w) \subsetneq \Pri(w)$. \qed
  \end{lemma}

  Next we recall a construction of \cite{2009:infinite_words_without_palindrome}. Consider the infinite word
  $\mu = \lim_{n \to \infty} u_n$, the limit of the sequence $u_0 = 01, u_{n+1} = u_n 23 \mirror{u}_n$. The word
  $\mu$ is uniformly recurrent, aperiodic, closed under reversal and contains only finitely many palindromes, namely
  only the letters $0,1,2$ and $3$. By applying the morphism
  \begin{align*}
    h:
    \begin{array}{l}
      0 \mapsto 101, \\
      1 \mapsto 1001, \\
      2 \mapsto 10001, \\
      3 \mapsto 100001 \\
    \end{array}
  \end{align*}
  to the word $\mu$ the authors obtain a uniformly recurrent aperiodic binary word which is closed under reversal and
  contains only finitely many palindromes (the longest is of length $12$). By direct inspection it can be verified that
  each palindrome in $h(\mu)$ is privileged. Hence we have:

  \begin{lemma}\label{lem:unif_recurrent_closed_pal_inclusion}
    There exists an infinite uniformly recurrent aperiodic binary word $w$ having the following properties: $w$ is
    closed under reversal, $w$ contains finitely many palindromes and $\Pal(w) \subsetneq \Pri(w)$. \qed
  \end{lemma}

  However it turns out that if a uniformly recurrent word contains infinitely many palindromes, then the inclusion can
  not be proper. Note that such a word is necessarily closed under reversal.

  \begin{proposition}\label{prp:unif_recurrent_inf_pal_rich}
    Let $w$ be a uniformly recurrent word containing infinitely many palindromes. If $\Pal(w) \subseteq \Pri(w)$, then
    $\Pal(w) = \Pri(w)$, that is, $w$ is rich.
  \end{proposition}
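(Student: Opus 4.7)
The plan is to suppose for contradiction that $w$ is not rich and derive a contradiction. By \autoref{lem:defect_shortest_inclusion} there exists a shortest privileged factor $u$ of $w$ that is not a palindrome. I first observe that $w$ is closed under reversal: by uniform recurrence every factor $f$ of $w$ occurs in every factor of $w$ of sufficient length, and $w$ has palindromic factors of arbitrarily large length, so $f$ is contained in some palindromic factor of $w$; by palindromic symmetry $\mirror{f}$ is also a factor of $w$. In particular $\mirror{u} \in \Fac(w)$.

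The main step is to prove, by strong induction on $|P|$, the following claim: \emph{for every privileged palindromic factor $P$ of $w$, every privileged factor of $P$ is a palindrome}. Granted this claim, I take a palindromic factor $P$ of $w$ long enough to contain $u$ (which exists by uniform recurrence of $u$ together with the presence of arbitrarily long palindromic factors). By the hypothesis $\Pal(w) \subseteq \Pri(w)$, such a $P$ is privileged, so the claim forces $u$ itself to be a palindrome, contradicting the choice of $u$.

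For the inductive step, $P$ is a complete first return to its longest proper privileged prefix $Q$ by the second elementary lemma on privileged words. Since every privileged prefix of $P$ is also a privileged suffix of $P$ by the first elementary lemma, and $P = \mirror{P}$, the prefix $Q$ of $P$ and the suffix $\mirror{Q}$ of $P$ of the same length must coincide, so $Q = \mirror{Q}$ is a palindrome, which is privileged in $w$ by hypothesis. In the non-overlap case $|P| \geq 2|Q|$ I write $P = QzQ$; the palindromicity of $P$ and $Q$ forces the middle $z$ to be a palindrome, hence privileged by hypothesis, and the inductive hypothesis applies to both $Q$ and $z$. Any privileged factor of $P$ wholly contained in a copy of $Q$ or in $z$ is then palindromic by induction. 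The difficult case is when a privileged factor $f$ of $P$ straddles block boundaries: here I analyze the longest proper privileged prefix $g$ of $f$, which by the first elementary lemma is also a privileged suffix of $f$. A length comparison shows that (a suitable occurrence of) $g$ is a privileged factor of $Q$ or of $z$, so $g$ is palindromic by the inductive hypothesis; then, using $P = \mirror{P}$ together with the fact that $\mirror{f}$ is itself a privileged factor of $P$ (since the privileged property is closed under reversal and $P$ is a palindrome), one concludes $f = \mirror{f}$. The overlap case $|P| < 2|Q|$ is handled by the analogous argument using the periodicity of $P$ imposed by $Q$.

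The main obstacle will be completing the straddling case in full detail, and in particular controlling the word lying between the two occurrences of the palindromic prefix $g$ of $f$; this is where the palindromic symmetry of $P$ is used most delicately, in combination with the privileged-prefix chain of $f$ provided by the first elementary lemma.
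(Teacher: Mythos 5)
There is a genuine gap, and you have located it yourself: the straddling case of your inductive step is not proved, and the argument you sketch for it does not close. Knowing that the longest proper privileged prefix $g$ of $f$ is a palindrome and that $f$ is a complete first return to $g$ says nothing about the interior of $f$; moreover $g$, being a prefix of a straddling $f$, may itself straddle a block boundary, so the ``length comparison'' placing an occurrence of $g$ inside $Q$ or $z$ is not available in general. Worst of all, the concluding step --- that $f$ and $\mirror{f}$ are both privileged factors of the palindrome $P$, hence $f = \mirror{f}$ --- is a non sequitur: a word can perfectly well contain a privileged factor together with its distinct reversal (in the Thue--Morse word both $00101100$ and its reversal $00110100$ are privileged factors), and nothing you have written forces the two mirror-image occurrences inside $P$ to be occurrences of the same word. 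So the claim you induct on, while true a posteriori, is not established.

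The induction is also unnecessary: you already hold both ingredients of a two-line argument (palindromic symmetry of an ambient palindrome, and the lemma that a privileged prefix of a privileged word is also a suffix), but you apply them only to $Q$ relative to $P$ instead of to $u$ itself. This is exactly what the paper does. By uniform recurrence and the existence of arbitrarily long palindromic factors, $u$ occurs in some palindrome $p$. A central factor of a palindrome is a palindrome, so the chosen occurrence of $u$ is not central in $p$; hence the central factor $q$ of $p$ that starts at the first position of this occurrence (or, in the symmetric case, ends at its last position) has length greater than $|u|$, begins with $u$, and ends with the mirror-image occurrence $\mirror{u}$. Being a central factor of a palindrome, $q$ is a palindrome, hence privileged by the hypothesis $\Pal(w) \subseteq \Pri(w)$. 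Since its prefix $u$ is privileged, $u$ is also a suffix of $q$; but that suffix is $\mirror{u}$, so $u = \mirror{u}$, contradicting the choice of $u$. Replacing your induction by this central-factor construction completes the proof.
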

  \begin{proof}
    Assume on the contrary that $\Pal(w) \subseteq \Pri(w)$ and that $w$ is not rich. Then there exists a privileged
    factor $u$ which is not a palindrome. Since $w$ is uniformly recurrent $u$ is a factor of some palindrome $p$.
    Clearly $u$ cannot be a central factor of $p$. Thus there exists a central factor $q$ of $p$ which begins with
    $u$ and ends with $\mirror{u}$ (or $q$ begins with $\mirror{u}$ and ends with $u$, but this case is symmetric).
    It is immediate that $q$ is a palindrome. Thus by the assumption $q$ is privileged. As $q$ has as a prefix the
    privileged word $u$, the prefix $u$ also occurs as a suffix of $q$. Hence $u$ is a palindrome, a contradiction.
  \end{proof}

  Note that in the proof uniform recurrence was only needed to establish that $u$ is a factor of some palindrome.
  Thus to obtain the result it is only necessary to suppose that every privileged factor occurs in some palindrome.

  \section{Acknowledgments}
  The author was supported by a FiDiPro grant (137991) from the Academy of Finland.

  We thank the referees for carefully reading the manuscript. The comments greatly improved the presentation.

  \printbibliography
 \end{document}